\DeclareMathOperator\arctanh{arctanh}
\DeclareMathOperator\arccot{arccot}
\newcommand{\vol}{\operatorname{vol}}
\newcommand{\bbH}{\mathbb{H}}
\newcommand{\bbR}{\mathbb{R}}
\newcommand{\bh}{\mathbf{h}}
\newcommand{\ba}{\mathbf{a}}
\newcommand{\Bu}{\boldsymbol{u}}
\newcommand{\cD}{\mathcal{D}}
\newcommand{\cF}{\mathcal{F}}
\newcommand{\cT}{\mathcal{T}}
\newcommand{\cC}{\mathcal{C}}
\newcommand{\cB}{\mathcal{B}}
\newcommand{\Hor}{\text{Hor}}
\newtheorem{theorem}{Theorem}
\newtheorem{corollary}{Corollary}
\newtheorem{lemma}{Lemma}
\newtheorem{defn}{Definition}
\newtheorem{rmrk}{Remark}
\newenvironment{definition}{\begin{defn}\normalfont}{\end{defn}}
\begin{document}

\title{Optimal Horoball Packing Densities for Koszul-type tilings in Hyperbolic $3$-space}

\author{Robert T. Kozma}
\address{Department of Mathematics, Statistics, and Computer Science,
 University of Illinois at Chicago,
Chicago IL 60607 USA \\
Currently: iSpot AI,
15831 NE 8th st,
Bellevue WA 98008 USA\\ 
e-mail: rkozma2@uic.edu}

\author{Jen\H{o}  Szirmai}
\address{Department of Algebra and Geometry, Institute of Mathematics,
Budapest University of Technology and Economics,
M\H{u}egyetem rkp. 3.,
H-1111 Budapest, Hungary\\
e-mail: szirmai@math.bme.hu}
\date{}

\begin{abstract}
We determine the optimal horoball packing densities for Koszul-type Coxeter simplex tilings in hyperbolic $3$-space.  
Using a parametrization of horoballs by the Busemann function and the symmetry of the tilings, we obtain families of packings that attain the universal simplicial density upper bound
\[
d_3(\infty) \;=\; \left( 2 \sqrt{3}\,\Lambda\!\left(\tfrac{\pi}{3}\right) \right)^{-1}
 \;\approx\; 0.853276,
\]
where $\Lambda$ denotes the Lobachevsky function.  
These results show that extremal packing densities in $\mathbb{H}^3$ are realized by multiple explicit Coxeter tilings and are closely tied to special values of $L$-functions and hyperbolic manifold volumes.

\end{abstract}

\keywords{Busemann function, Coxeter group, horoball, hyperbolic geometry, packing, tiling}
\subjclass{52C17, 52C22, 52B15}

%%%%%%%%%%%%%%%%%%%%%%%%%%%%%%%%%%%%%%%%%%%
\maketitle
%\tableofcontents
%%%%%%%%%%%%%%%%%%%%%%%%%%%%%%%%%%%%%%%%%%%

%==================================================================%
%                             the main article                               %
%==================================================================%

%%%%%%%%%%%%%%%%%%%%%%%%%%%%%%%%%%%%%%%%%%%
%%%%%%%%%%%%%%%%%%%%%%%%%%%%%%%%%%%%%%%%%%%
\section{Introduction}
%%%%%%%%%%%%%%%%%%%%%%%%%%%%%%%%%%%%%%%%%%%
%%%%%%%%%%%%%%%%%%%%%%%%%%%%%%%%%%%%%%%%%%%

\subsection{Statement of the Result}

In this paper we extend the results of \cite{KSz} to the 23 tetrahedral Koszul simplex groups $\Gamma$ acting on hyperbolic $3$-space $\mathbb{H}^3$ with at least one ideal vertex; see Table \ref{table:simplex_list}.
For these Coxeter tilings we determine the optimal horoball packing densities.

The main invariants appearing in our results are classical special functions.
The \emph{Lobachevsky function} is defined by
\begin{equation}
\Lambda(\theta) \;=\; \frac{1}{2}\,\mathrm{Im}\!\left( \mathrm{Li}_2\!\left( e^{2 i \theta} \right)\right)
 \;=\; - \int_0^{\theta} \ln |2 \sin t| \, dt,
\end{equation}
where $\mathrm{Li}_2$ denotes the dilogarithm.
The $\Lambda$ function admits several alternative descriptions: while Kummer expressed it in terms of the dilogarithm, it can also be written using the polygamma function $\psi_1$.
Its special values are closely connected to values of Dirichlet $L$-functions.  

We also recall \emph{Catalan's constant}
\begin{equation}
C \;=\; L(2,\chi_{-4}) \;=\; \sum_{n=0}^\infty \frac{(-1)^n}{(2n+1)^2},
\end{equation}
the alternating sum of reciprocals of odd squares, which equals one quarter of the volume of the ideal hyperbolic octahedron.  

Another important quantity is the \emph{asymptotic simplicial density} introduced by Kellerhals \cite{K98}:
\begin{equation}
d_3(\infty) \;=\; \left( 2 \sqrt{3}\, \Lambda\!\left(\tfrac{\pi}{3}\right) \right)^{-1}
\;=\; 0.85328\dots.
\end{equation}
This gives the universal upper bound for horoball packing density in $\mathbb{H}^3$, and it is attained by certain Koszul simplex tilings (Theorem~\ref{thm:336}).  

The special value $\Lambda(\tfrac{\pi}{3})$ is especially significant.  
It is related to Gieseking’s constant,
\begin{equation}
G \;=\; 3 \Lambda\!\left(\tfrac{\pi}{3}\right) \;=\; \tfrac{3\sqrt{3}}{4}\, L(2,\chi_{-3}),
\end{equation}
which equals the hyperbolic volume of the Gieseking manifold, the smallest-volume nonorientable cusped hyperbolic $3$-manifold.
Thus our density results can be expressed equivalently in terms of $L(2,\chi_{-3})$.

The following theorems give the optimal packing densities for each commensurability class.

%[3,3,6]
\begin{theorem}
The optimal horoball packing density of Coxeter simplex tilings $\cT_{\Gamma}$, 
$\Gamma \in \Big\{ \overline{V}_3, \overline{Y}_3, \overline{VP}_3, \widehat{PP}_3, \overline{P}_3, \overline{Z}_3, \overline{DV}_3, \overline{DP}_3 \Big\}$
 is $\delta_{opt}(\Gamma) = \left( 2 \sqrt{3}\, \Lambda\!\left(\tfrac{\pi}{3}\right) \right)^{-1} \approx 0.853276$.
 
The optimal horoball packing density of Coxeter simplex tilings $\cT_{\Gamma}$, \\
$\Gamma \in \Big\{ \overline{BV}_3, \overline{BP}_3, \overline{VV}_3 \Big\}$ is $\delta_{opt}(\Gamma) = \frac{2}{5 \sqrt{3}} \left(\Lambda(\frac{\pi}{3})\right)^{-1} \approx 0.682620$.
\label{thm:336}
\end{theorem}

%[3,4,4]
\begin{theorem}
The optimal horoball packing density of Coxeter simplex tilings $\cT_{\Gamma}$, 
$\Gamma \in \Big\{ \overline{R}_3, \overline{O}_3, \widehat{BR}_3, \overline{N}_3, \overline{M}_3, \widehat{RR}_3 \Big\}$
 is $\delta_{opt}(\Gamma) = \frac{3}{4}C^{-1} \approx 0.818808$.
 
\label{thm:344}
\end{theorem}

%[5,3,6]
\begin{theorem}
The optimal horoball packing density of Coxeter simplex tilings $\cT_{\Gamma}$, 
$\Gamma \in \Big\{ \overline{HV}_3, \overline{HP}_3 \Big\}$
 is $\delta_{opt}(\Gamma) \approx 0.550841$.
 
\label{thm:536}
\end{theorem}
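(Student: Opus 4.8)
The plan is to handle both tilings through the single-ideal-vertex machinery of Lemma \ref{lem:loc} and Lemma \ref{lem:glob}, since the paragraph preceding the theorem records that $\overline{HV}_3$ and $\overline{HP}_3$ each have exactly one asymptotic vertex. First I would fix the Cayley--Klein coordinates of the fundamental simplex $\cF_{\Gamma}$ with ideal vertex $A_0=(1,0,0,1)$ and model center $A_1=(1,0,0,0)$, then place $A_2,A_3$ according to the dihedral angles encoded in the Coxeter diagram of $[5,3,6]$ (respectively $[5,3^{[3]}]$) and read off the polar forms $\Bu_i$ of the opposite faces, as tabulated in Table \ref{table:data_536}.

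For $\overline{HV}_3$ the procedure of Lemma \ref{lem:loc} runs as follows. The optimal horoball $\mathcal{B}_0(s)$ is the largest one centered at $A_0$ that remains inside $\cF_{\Gamma}$; by the lemma it is tangent to the opposite face $\Bu_0$, with tangency point $[\mathbf{f}_0]$ given by the projection formula \eqref{eq:u4fp}. Substituting $[\mathbf{f}_0]$ into the horosphere equation \eqref{eq:horosphere} fixes the parameter $s_0$. I would then intersect $\partial\mathcal{B}_0$ with the three edges $A_0A_i$ via the parameterization $\bh_i(\lambda)=\lambda\ba_0+\ba_i$, compute the horospherical side lengths $L_{ij}$ from \eqref{prop_dist} and \eqref{eq:horo_dist}, and feed these into the Cayley--Menger determinant to obtain the Euclidean area $\mathcal{A}$ of the horospherical triangle. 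Bolyai's formula \eqref{eq:bolyai} gives $vol(\mathcal{B}_0\cap\cF_{\Gamma})=\tfrac{1}{2}\mathcal{A}$, and dividing by the simplex volume $vol(\overline{HV}_3)=\tfrac{1}{2}\Lambda(\tfrac{\pi}{3})+\tfrac{1}{4}\Lambda(\tfrac{\pi}{3})-\tfrac{1}{4}\Lambda(\tfrac{\pi}{30})$ recorded above yields $\delta_{opt}(\overline{HV}_3)\approx 0.550841$.

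For $\overline{HP}_3$ I would run the identical single-vertex computation with its own coordinate data from Table \ref{table:data_536}. The equality of the two resulting densities is then consistent with commensurability: since $\overline{HP}_3<\overline{HV}_3$ is an index-$2$ subgroup (Fig.~\ref{fig:lattice_of_subgroups_H3}) with $vol(\overline{HP}_3)=2\,vol(\overline{HV}_3)$, the larger fundamental domain is a union of two copies of the smaller, so both the horoball volume and the simplex volume double and the ratio is unchanged. Lemma \ref{lem:glob} then propagates the local density to the full tiling of $\overline{\bbH}^3$ in each case.

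The main obstacle is computational rather than conceptual. Because this class carries a $\pi/5$ dihedral angle inherited from the asymptotic dodecahedron, the vertex coordinates $A_2,A_3$ and the polar forms $\Bu_i$ involve golden-ratio irrationalities, so the tangency point, the parameter $s_0$, and the horospherical edge lengths do not collapse to the clean rational or quadratic-radical expressions seen in the $[3,3,6]$ and $[3,4,4]$ classes; correspondingly the final density is a genuinely nonarithmetic value with no elementary closed form. The delicate point is therefore to verify that $\mathcal{B}_0(s_0)$ is truly the maximal admissible horoball --- that tangency to $\Bu_0$ is reached before the horoball can protrude through any other face --- which must be confirmed against the explicit geometry of Table \ref{table:data_536} rather than inferred by symmetry.
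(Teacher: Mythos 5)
Your proposal follows the paper's proof essentially verbatim: the paper likewise treats both simplices as simply asymptotic, invokes Lemma \ref{lem:loc} to compute the maximal horoball tangent to the face $[\Bu_0]$ from the coordinate data in Table \ref{table:data_536}, and extends the local density to the tiling by Lemma \ref{lem:glob}. Your additional observations (the index-$2$ volume-doubling consistency check and the need to confirm the tangency face) are sound but do not change the route.
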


%Other nonarithmetic
\begin{theorem}
The optimal horoball packing densities of Coxeter simplex tilings 
$\cT_{\widehat{AV}_3}$, 
$\cT_{\widehat{BV}_3}$, 
$\cT_{\widehat{HV}_3}$, and 
$\cT_{\widehat{CR}_3}$ are respectively
$\delta_{opt}(\widehat{AV}_3) = 0.838825$, 
$\delta_{opt}(\widehat{BV}_3) = 0.747914$, 
$\delta_{opt}(\widehat{HV}_3) = 0.655381$, and
$\delta_{opt}(\widehat{CR}_3) = 0.767195$.

\label{thm:other}
\end{theorem}

Upper bounds for the packing density were published by Kellerhals \cite{K98} using the simplicial density function $d_n(\infty)$. This bound is strict for $n=3$. Combining the results of Theorems \ref{thm:336}--\ref{thm:other}:

\begin{corollary} % make corollary
The optimal horoball packing density for noncompact Coxeter simplex tilings in
$\overline{\bbH}^3$, 
$$d_3(\infty) \;=\; \frac{\sqrt{3}}{2} G^{-1} \;=\; \frac{2}{3}L(2, \chi_{-3})^{-1}$$ 
is realized in tilings by eight simplex groups in the $[3,3,6]$ commensurability class. 
\label{cor:main}
\end{corollary}

\subsection{Relation to Earlier Work}

% maybe define horoball somewhere here, at least in a easy way?
This is the fifth paper in our series determining optimal horoball packing densities in $\bbH^n$ for Koszul-type non-compact Coxeter simplex tilings with $3 \leq n \leq 9$. 
In \cite{KSz}, we showed that the classical horoball packing in $\overline{\bbH}^3$ achieving the B\"or\"oczky-type simplicial packing density upper bound $d_3(\infty)$ (cf. Theorem \ref{t:Boroczky}) is not unique, and demonstrated several new examples using horoballs of {\it different types} -- a notion since made precise through Busemann functions \cite{Busemann, burger2013rigidity, KT}. 
We use isometry invariant Busemann functions to strengthen the notion of \emph{horoball type} with respect to a fundamental domain and to parameterize horoballs centered at $\xi \in \partial \bbH^n$ with respect to a marked reference point $o \in \bbH^n$ (alternatively a reference horoball through $\xi$ and $o$)  in the model of $\overline{\bbH}^n$, see Section \ref{busemann}. This perspective shows that optimal packings cannot be made equivalent by repartitioning, a nontrivial hyperbolic isometry, or some paradoxical construction, and clarifies our earlier results prior to \cite{KT,KSz21}.
In \cite{KSz14,KSz18,KSz21} we considered higher dimensions $ 4 \leq n \leq 9$ where Koszul-type tilings exist.
In \cite{KSz14}, we found seven horoball packings of Coxeter simplex tilings in $\overline{\mathbb{H}}^4$ that yield densities of $5\sqrt{2}/\pi^2 \approx 0.71645$, counterexamples to L. Fejes T\'oth's conjecture for the maximal packing density of $\frac{5-\sqrt{5}}{4} \approx 0.69098$ in his foundational book {\it Regular Figures} \cite[p. 323]{FTL}.
In \cite{KSz18,KSz21} we explicitly constructed the densest known ball packings in $\overline{\mathbb{H}}^n$ for dimensions $6 \le n \le 9$. For example in $\overline{\bbH}^5$ the optimal packing density is $\frac{5}{7 \zeta(3)}$ where $\zeta(\cdot)$ is the Riemann Zeta function.

Let $X$ denote a space of constant curvature, either the $3$-dimensional sphere $\mathbb{S}^3$, 
Euclidean space $\mathbb{E}^3$, or 
hyperbolic space $\overline{\bbH}^3$. An important question of discrete geometry is to find the highest possible packing density in $X$ by congruent non-overlapping balls of a given radius \cite{FFK,G--K}. 
The definition of packing density is critical in hyperbolic space as shown by B\"or\"oczky \cite{B78}, for the standard paradoxical construction see \cite{G--K,R06}. 
The most widely accepted notion of packing density considers the local densities of balls with respect to their Dirichlet--Voronoi cells (cf. \cite{B78,K98}). In order to study horoball packings in $\overline{\mathbb{H}}^3$, we use an extended notion of such local density. 

Let $B$ be a horoball of packing $\cB$, and $P \in \overline{\mathbb{H}}^3$ an arbitrary point. 
Define $d(P,B)$ to be the shortest distance from point $P$ to the horosphere $S = \partial B$, where $d(P,B)\leq 0$  if $P \in B$. The Dirichlet--Voronoi cell $\cD(B,\cB)$ of horoball $B$ is the convex body
\begin{equation}
\cD(B,\cB) = \{ P \in \mathbb{H}^3 | d(P,B) \le d(P,B'), ~ \forall B' \in \cB \}. \notag
\end{equation}
Both $B$ and $\cD$ have infinite volume, so the standard notion of local density is
modified. Let $\xi \in \partial{\mathbb{H}}^3$ denote the ideal center of $B$, and take its boundary $\mathcal{S}$ to be the one-point compactification of Euclidean plane.
Let $B_C(r) \subset S$ be the Euclidean ball with center $C \in \mathcal{S} \setminus \{\xi\}$.
Then $\xi$ and $B_C(r)$ determine a convex cone 
$\cC(r) = cone_{\xi}\left(B_C(r)\right) \in \overline{\mathbb{H}}^3$ with
apex $\xi$ consisting of all hyperbolic geodesics passing through $B_C(r)$ with limit point $\xi$. The local density $\delta_3(B, \cB)$ of $B$ to $\cD$ is defined as
\begin{equation}
\delta_n(\cB, B) =\varlimsup\limits_{r \rightarrow \infty} \frac{vol_{\bbH^3}(B \cap \cC(r))} {vol_{\bbH^3}(\cD \cap \cC(r))}. \notag
\end{equation}
This limit is independent of the choice of center $C$ for $B_C(r)$.

In the case of periodic ball or horoball packings, this local density defined above extends to the entire hyperbolic space via its symmetry group, and 
is related to the simplicial density function (defined below) that we generalized in \cite{Sz12,Sz12-2}.
In this paper we shall use such definition of packing density (cf. Section \ref{s:lemmas}).  

A Coxeter simplex is a top dimensional simplex in $X$ with dihedral angles either integral submultiples of $\pi$ or zero. 
The group generated by reflections on the sides of a Coxeter simplex is a Coxeter simplex reflection group. 
Such reflections generate a discrete group of isometries of $X$ with the Coxeter simplex as the fundamental domain. 
Hence the groups give regular tessellations of $X$ if the fundamental simplex is characteristic. The Coxeter groups are finite for $\mathbb{S}^3$, and infinite for $\mathbb{E}^3$ or $\overline{\mathbb{H}}^3$. Tables of the hyperbolic 3-simplex groups can be found in \cite[\S10.4]{EGM, VS}.

There are non-compact Coxeter simplices in $\overline{\mathbb{H}}^n$ with ideal vertices on $\partial \mathbb{H}^n$, however only for dimensions $2 \leq n \leq 9$; furthermore, only a finite number exist in dimensions $n \geq 3$. 
Johnson {\it et al.} \cite{JKRT} found the volumes of all Coxeter simplices in hyperbolic $n$-space. 
Such simplices are the most elementary building blocks of hyperbolic manifolds,
the volume of which is an important topological invariant. 

In $n$-dimensional space $X$ of constant curvature
 $(n \geq 2)$, define the simplicial density function $d_n(r)$ to be the density of $n+1$ mutually tangent balls of radius $r$ in the simplex spanned by their centers. L.~Fejes T\'oth and H.~S.~M.~Coxeter
conjectured that the packing density of balls of radius $r$ in $X$ cannot exceed $d_n(r)$.
Rogers \cite{Ro64} proved this conjecture in Euclidean space $\mathbb{E}^n$.
The $2$-dimensional spherical case was settled by L.~Fejes T\'oth \cite{FTL}, and B\"or\"oczky \cite{B78} gave a proof for the extension:
\begin{theorem}[K.~B\"or\"oczky]
In an $n$-dimensional space of constant curvature, consider a packing of spheres of radius $r$.
In the case of spherical space, assume that $r<\frac{\pi}{4}$.
Then the density of each sphere in its Dirichlet--Voronoi cell cannot exceed the density of $n+1$ spheres of radius $r$ mutually
touching one another with respect to the simplex spanned by their centers.
\label{t:Boroczky}
\end{theorem}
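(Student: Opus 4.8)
The plan is to establish the theorem as a purely local statement: it suffices to show that for each sphere $B_i$ in the packing, the ratio $\mathrm{vol}(B_i)/\mathrm{vol}(D_i)$, where $D_i$ is the Dirichlet--Voronoi cell of $B_i$, never exceeds the simplicial density $d_n(r)$. Since the cells $D_i$ tile $X$ and each contains exactly one sphere, a bound on every local ratio immediately bounds the global Dirichlet--Voronoi density. First I would record the elementary consequence of the packing condition: because the spheres are congruent of radius $r$ and disjoint, the wall of $D_i$ separating $B_i$ from a neighbor $B_j$ is the perpendicular bisector of the geodesic segment $c_i c_j$ joining their centers (for equal radii the cell of a ball coincides with the cell of its center), and therefore $d(c_i,\partial D_i)\ge r$, since the nearest point of each bisector is the midpoint of $c_i c_j$ and $d(c_i,c_j)\ge 2r$.

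Next I would reduce the cell estimate to a single-simplex estimate by the Rogers-type orthoscheme subdivision adapted to constant curvature. Fixing the center $c_i = F_0$, I would choose a complete flag of faces of $D_i$ and drop successive perpendiculars $F_1, F_2, \ldots, F_n$, where $F_k$ is the foot of the perpendicular from $F_{k-1}$ onto the chosen $(n-k)$-face, obtaining a decomposition of $D_i$ into finitely many orthoschemes $T$, each having $F_0$ as a vertex. Because the sphere is centered at $F_0$, the portion of $B_i$ inside $T$ is a spherical sector of the orthoscheme, so $\mathrm{vol}(B_i) = \sum_T \mathrm{vol}(B_i \cap T)$ and $\mathrm{vol}(D_i) = \sum_T \mathrm{vol}(T)$. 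Hence it is enough to prove, for each orthoscheme arising this way,
\[
\frac{\mathrm{vol}(B_i \cap T)}{\mathrm{vol}(T)} \le d_n(r),
\]
after which the inequality for $D_i$ follows by the mediant inequality (if $a_k/b_k \le c$ with $b_k>0$ for all $k$, then $(\sum_k a_k)/(\sum_k b_k)\le c$).

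The heart of the matter --- and the step I expect to be the main obstacle --- is the extremal lemma for a single orthoscheme. The orthoscheme is determined by the edge lengths along the flag (equivalently its dihedral angles), and the packing constraint from the previous step forces the first edge $|F_0 F_1| \ge r$ together with analogous lower bounds further out. I would parameterize the orthoscheme by these lengths and show that the sector-to-simplex density is a monotone function of each outward parameter: pushing a far vertex outward enlarges the simplex faster than it enlarges the sector, so the density decreases. Consequently the density is maximized in the maximally ``shrunk'' configuration permitted by the constraints, which is precisely the orthoscheme cut out of the regular simplex spanned by the centers of $n+1$ mutually tangent spheres of radius $r$; its density is by definition $d_n(r)$. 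Making this monotonicity rigorous in spaces of constant curvature is the delicate part: it requires differentiating the volumes of both the orthoscheme and the spherical sector with respect to the parameters and controlling the signs, for which the second-order Schl\"afli-type volume differential is the natural tool. It is here that the spherical hypothesis $r<\frac{\pi}{4}$ enters, keeping the relevant configurations nondegenerate (centers at distance $2r<\frac{\pi}{2}$, avoiding antipodal collapse) so that the monotonicity retains a fixed sign.

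Finally I would assemble the pieces: the orthoscheme bound gives $\mathrm{vol}(B_i \cap T) \le d_n(r)\,\mathrm{vol}(T)$ for every piece, summation over the subdivision of $D_i$ yields $\mathrm{vol}(B_i) \le d_n(r)\,\mathrm{vol}(D_i)$, and hence the Dirichlet--Voronoi density of each sphere is at most $d_n(r)$, as claimed. The Euclidean instance of the key monotonicity is Rogers' original simplex argument, and the hyperbolic and spherical instances follow the same template with the constant-curvature volume formulas substituted throughout.
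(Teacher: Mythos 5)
First, a point of comparison: the paper does not prove this statement at all --- it is quoted as B\"or\"oczky's theorem and attributed to \cite{B78}, so there is no in-paper proof to measure your attempt against. Judged on its own, your outline reproduces the correct architecture of the Rogers--B\"or\"oczky strategy: reduction to a local bound on each Dirichlet--Voronoi cell, the observation that for congruent balls the cell walls are perpendicular bisectors so that $d(c_i,\partial D_i)\ge r$, subdivision of the cell into orthoschemes sharing the vertex $c_i$, and the mediant-inequality summation at the end. Those reductions are sound, modulo the standard technicality that the feet $F_k$ need not lie in the interiors of the chosen faces, so the orthoscheme decomposition must in general be taken with signs.

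The genuine gap is the extremal lemma for a single orthoscheme, which you yourself flag as the main obstacle and then assert rather than prove. The claim that the sector-to-orthoscheme density is separately monotone in each outward flag parameter is precisely the statement whose failure to generalize easily kept the Fejes T\'oth--Coxeter conjecture open in curved spaces for roughly two decades after Rogers' Euclidean proof; B\"or\"oczky's 1978 argument does not obtain it by differentiating volumes via a Schl\"afli-type formula, but through a long chain of comparison lemmas (density estimates in cones, symmetrization, and induction on dimension) that occupy most of \cite{B78}. As written, ``pushing a far vertex outward enlarges the simplex faster than it enlarges the sector'' is the Euclidean intuition; in $\mathbb{H}^n$ and $\mathbb{S}^n$ the responses of the simplex volume and the sector volume to a boundary perturbation depend on the whole configuration, and no short sign argument of the kind you sketch is known. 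Your placement of the hypothesis $r<\frac{\pi}{4}$ (keeping tangent centers at distance $<\frac{\pi}{2}$ and the center simplex nondegenerate) is the right heuristic, but it too only enters through those detailed lemmas. So the proposal is an accurate roadmap whose load-bearing step is missing, not a proof.
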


In hyperbolic 3-space, the monotonicity of $d_3(r)$ was established by B\"or\"oczky and Florian
in \cite{B--F64}, while Marshall \cite{Ma99} 
showed that for sufficiently large dimensions $n$ 
the function $d_n(r)$ is strictly increasing in $r$. Kellerhals \cite{K98} further proved that $d_n(r)<d_{n-1}(r)$ and that, in the cases considered by Marshall, the local density of each ball in its Dirichlet--Voronoi cell is bounded above by the simplicial horoball density $d_n(\infty)$. Theorem \ref{t:Boroczky} is extended to the case of horoballs in \cite[\S 6]{B78} as a remark.

The simplicial upper bound
 \[
d_3(\infty) = \Bigl(1+\frac{1}{2^2}-\frac{1}{4^2}-\frac{1}{5^2}+\frac{1}{7^2}+\frac{1}{8^2}--++\cdots\Bigr)^{-1} \approx 0.85327
 \]
cannot be realized by packings of regular balls, but it is attained by horoball packings associated with the ideal Coxeter simplex tiling of $\overline{\mathbb{H}}^3$ with Schl\"afli symbol $[3,3,6]$. In this case the horoball centers lie at the ideal vertices of the regular simplex on $\partial\overline{\mathbb{H}}^3$. 

Such bounds have important applications beyond packing problems. 
In dimension three, B\"or\"oczky-type estimates for horoball packings have been used to study the volume of cusped hyperbolic manifolds \cite{A87,M86,ACS,MaM}. 
If $\Gamma$ is a discrete torsion-free subgroup of isometries, then $\bbH^3/\Gamma$ is a cusped hyperbolic manifold, and cusp neighborhoods lift to horoball packings in the universal cover. 
A manifold with a single cusp has a well-defined maximal cusp neighborhood, while manifolds with multiple cusps admit a range of non-overlapping cusp neighborhoods whose boundaries meet tangentially; these give rise to different horoball types in $\bbH^3$. 
An important application is Adams' proof that the Gieseking manifold is the unique noncompact hyperbolic $3$-manifold of minimal volume \cite{A87}. Kellerhals then used the B\"or\"oczky-type bounds to estimate volumes of higher dimensional hyperbolic manifolds \cite{K98_2}.  

In \cite{KSz} we proved that the classical horoball packing configuration in $\mathbb{H}^3$ achieving the B\"or\"oczky-type upper bound  is not unique. We gave several examples of different regular horoball packings using horoballs of different types, that is horoballs that have different relative densities with respect to the fundamental domain, that yield the B\"or\"oczky--Florian-type simplicial upper bound \cite{B--F64}. 
We also note that in related work the second-named author studied simply truncated Coxeter orthoschemes in $\bbH^3$, where the simplicial density bound is not realized \cite{SzY}.

%%%%%%%%%%%%%%%%%%%%%%%%%%%%%%%%%%%%%%%%%%%
%%%%%%%%%%%%%%%%%%%%%%%%%%%%%%%%%%%%%%%%%%%
\section{Preliminaries}
%%%%%%%%%%%%%%%%%%%%%%%%%%%%%%%%%%%%%%%%%%%
%%%%%%%%%%%%%%%%%%%%%%%%%%%%%%%%%%%%%%%%%%%

We use the projective Cayley--Klein model of hyperbolic geometry to preserves lines and convexity for the packing of simplex tilings with convex fundamental domains. 
In this section we review some key concepts, for a general discussion of the projective models of Thurston geometries see \cite{Mol97,MSz}.

%%%%%%%%%%%%%%%%%%%%%%%%%%%%%%%%%%%%%%%%%%%
\subsection{The Projective Model of $\overline{\mathbb{H}}^3$}
\label{ss:projective_model}
%%%%%%%%%%%%%%%%%%%%%%%%%%%%%%%%%%%%%%%%%%%

We begin with $\mathbb{R}^4$ equipped with the Lorentzian quadratic form
\[
\langle\mathbf{x},\mathbf{y}\rangle \;=\; -x_0 y_0 + x_1 y_1 + x_2 y_2 + x_3 y_3 ,
\]
for $\mathbf{x}=(x_0,x_1,x_2,x_3)$ and $\mathbf{y}=(y_0,y_1,y_2,y_3)$.
Nonzero vectors $\mathbf{x}\in \mathbb{R}^4$ represent points in projective space
\[
\mathbb{P}^3=\mathbb{P}(\mathbb{R}^{4}),
\]
equipped with the quotient topology of the projection 
\[
\Pi:\;\mathbb{R}^4 \setminus \{\mathbf{0}\} \longrightarrow \mathbb{P}^3, \quad 
\mathbf{x} \mapsto [\mathbf{x}].
\]

We partition $\mathbb{R}^4$ into
\[
Q_+ = \{\mathbf{v} \mid \langle \mathbf{v},\mathbf{v}\rangle > 0\},\quad
Q_0 = \{\mathbf{v} \mid \langle \mathbf{v},\mathbf{v}\rangle = 0\},\quad
Q_- = \{\mathbf{v} \mid \langle \mathbf{v},\mathbf{v}\rangle < 0\}.
\]
The hyperbolic $3$–space is realized as
\[
\mathbb{H}^3 = \Pi(Q_-), \qquad 
\partial\mathbb{H}^3 = \Pi(Q_0), \qquad 
\overline{\mathbb{H}}^3 = \mathbb{H}^3 \cup \partial \mathbb{H}^3.
\]
Points in $\Pi(Q_0)$ are  \emph{ideal} or  \emph{boundary points}, while points in $\Pi(Q_+)$ are the \emph{outer points}.

\medskip

\noindent\textbf{The Cayley–Klein section.}
For computations we use the Cayley–Klein model, obtained by intersecting $\mathbb{P}^3$ with the affine chart $x_0=1$. Thus a projective point $[\mathbf{x}]$ can be represented uniquely as
\[
\mathbf{x} = (1,x_1,x_2,x_3) \in \mathbb{R}^4 .
\]

\medskip

\noindent\textbf{Polarity.}
Two points $[\mathbf{x}], [\mathbf{y}] \in \mathbb{P}^3$ are called \emph{conjugate} if $\langle \mathbf{x},\mathbf{y} \rangle=0$.
The set of all points conjugate to $[\mathbf{x}]$ forms a hyperplane in $\mathbb{P}^3$,
\[
\operatorname{pol}([\mathbf{x}])=\{[\mathbf{y}]\in\mathbb{P}^3 \mid \langle\mathbf{x},\mathbf{y}\rangle=0\}.
\]
In this way the quadratic form $B$ induces a natural duality between points and hyperplanes in $\mathbb{P}^3$.  
Incidence of a point $[\mathbf{x}]$ with a hyperplane given by a pole $\mathbf{u}$ is expressed simply as
\[
\langle\mathbf{x},\mathbf{u}\rangle = 0.
\]

\medskip

\noindent\textbf{Hyperplanes and polyhedra.}
Let $P \subset \overline{\mathbb{H}}^3$ be a polyhedron bounded by finitely many hyperplanes $H^i$, each determined by a pole vector $\mathbf{b}^i \in \mathbb{R}^4$ normalized so that
\[
\langle\mathbf{b}^i,\mathbf{b}^i\rangle = 1.
\]
We assume each $\mathbf{b}^i$ points inward to $P$. Then
\[
H^i = \{\, [\mathbf{x}] \in \mathbb{H}^3 \mid \langle\mathbf{x},\mathbf{b}^i\rangle = 0 \,\}.
\]

Throughout, $P$ is an acute–angled polyhedron with proper or ideal vertices.  
The Gram matrix of $P$ is
\[
G(P) = \big(\langle\mathbf{b}^i,\mathbf{b}^j\rangle\big)_{i,j}
\]
which is symmetric of signature $(1,3)$. Its entries satisfy $\langle\mathbf{b}^i,\mathbf{b}^i\rangle=1$ and
\[
\langle\mathbf{b}^i,\mathbf{b}^j\rangle \;=\;
\begin{cases}
0 & \text{if $H^i \perp H^j$,}\\
-\cos(\alpha^{ij}) & \text{if $H^i,H^j$ meet along an edge at angle $\alpha^{ij}$,}\\
-1 & \text{if $H^i,H^j$ are parallel,}\\
-\cosh(l^{ij}) & \text{if $H^i,H^j$ admit a common perpendicular of length $l^{ij}$.}
\end{cases}
\]

The Coxeter diagram $\Sigma(P)$ encodes this data: each node corresponds to a hyperplane $H^i$, and nodes are joined if $H^i$ and $H^j$ are not orthogonal.  
Edges carry the label $k$ if $\alpha^{ij}=\pi/k$, and unlabeled edges indicate $\alpha^{ij}=\pi/3$. Examples are listed in Table~\ref{table:simplex_list}.

\medskip

\noindent\textbf{Distance and projections.}
The hyperbolic metric in the Cayley–Klein model has curvature $K=-1$.  
For proper points $[\mathbf{x}],[\mathbf{y}]\in\mathbb{H}^3$ (with $x_0=y_0=1$), the distance is given by
\begin{equation}
\cosh d([\mathbf{x}],[\mathbf{y}]) = 
\frac{-\langle\mathbf{x},\mathbf{y}\rangle}{\sqrt{\langle\mathbf{x},\mathbf{x}\rangle \, \langle\mathbf{y},\mathbf{y}\rangle}} .
\label{prop_dist}
\end{equation}
The orthogonal projection of a point $[\mathbf{x}]$ to the hyperplane with pole $\mathbf{u}$ is
\[
\mathbf{y} \;=\; \mathbf{x} - \frac{\langle\mathbf{x},\mathbf{u}\rangle}{\langle\mathbf{u},\mathbf{u}\rangle}\,\mathbf{u}.
\]

%%%%%%%%%%%%%%%%%%%%%%%%%%%%%%%%%%%%%%%%%%%
\subsection{Horospheres and Horoballs in $\overline{\mathbb{H}}^3$}
%%%%%%%%%%%%%%%%%%%%%%%%%%%%%%%%%%%%%%%%%%%
A \emph{horosphere} in $\overline{\mathbb{H}}^3$ is defined as a limit of metric spheres in $\mathbb{H}^3$ whose centers converge to an ideal point. More precisely, fix $x \in \mathbb{H}^3$ and $\xi \in \partial\mathbb{H}^3$, and let $c_t \in \mathbb{H}^3$ be a point on the geodesic ray $[x,\xi)$ with $c_t \to \xi$ as $t \to \infty$. The sequence of spheres in $\mathbb{H}^3$ centered at $c_t$ and passing through $x$ converges (in the Hausdorff sense) to the horosphere $\Hor_{\xi}(x)$ tangent to $\partial\mathbb{H}^3$ at $\xi$.  
A \emph{horoball} is the union of all metric balls in the above family, i.e. the closed region bounded by the horosphere $\Hor_{\xi}(x)$ and containing the geodesic ray $[x,\xi)$. 

\medskip

\noindent\textbf{Equation of a horosphere.}
Choose coordinates in $\mathbb{P}^3$ with standard basis vectors $\mathbf{a}_i$, $0 \leq i \leq 3$, such that the Cayley–Klein ball model of $\overline{\mathbb{H}}^3$ is centered at 
\[
O=(1,0,0,0),
\]
and orient it with the ideal point $\xi \in \partial\mathbb{H}^3$ at
\[
A_0 = (1,0,0,1).
\]
The boundary of $\mathbb{H}^3$ in this model is the quadric
\[
-x_0^2 + x_1^2 + x_2^2 + x_3^2 = 0,
\]
and the hyperplane $x_0-x_3=0$ is tangent to this quadric at $A_0$.  

The equation of a horosphere centered at $\xi=A_0$ and passing through an interior point $S=(1,0,0,s)$ is then a quadratic form of the type
\begin{equation}
0 = \lambda\big(-x_0^2 + x_1^2+x_2^2+x_3^2\big) + \mu(x_0-x_3)^2 .
\label{horosphere_eq}
\end{equation}
Evaluating at $S$ determines $\lambda,\mu$, giving
\[
\frac{\lambda}{\mu} = \frac{1-s}{1+s}, \qquad s \neq \pm 1.
\]
Thus the explicit equation of the horosphere in projective coordinates is
\begin{equation}
(s-1)\Big(-x_0^2 + \sum_{i=1}^3 x_i^2\Big) - (1+s)(x_0-x_3)^2 = 0.
\label{eq:horosphere}
\end{equation}

\noindent
The real parameter $s \in (-1,1)$ determines which horosphere in the one–parameter family centered at $A_0$ is chosen: for each $s$ the horosphere passes through the interior point $S=(1,0,0,s)$. 
We shall refer to $s$ as the \emph{$s$–parameter} of the horosphere (or horoball). 
Different values of $s$ yield distinct, non–congruent horoballs centered at $A_0$, while $s \to 1$ (resp.~$s \to -1$) corresponds to the horosphere approaching the boundary of the model.

In the Cayley–Klein model (the affine chart $x_0=1$), horospheres appear as Euclidean ellipsoids tangent to the boundary sphere of the model, and horoballs are the corresponding solid ellipsoids intersected with $\mathbb{H}^3$.

\medskip

\noindent\textbf{Isometries and horoball types.}
For any two horoballs $B,B'\subset\overline{\mathbb{H}}^3$ there exists an isometry $g \in \mathrm{Isom}(\overline{\mathbb{H}}^3)$ with action $g.B=B'$.  
Nevertheless, in the context of packings it is often convenient to distinguish between horoballs relative to a fixed fundamental domain. Following~\cite{Sz12-2}, we say two horoballs are of the \emph{same type} (or are \emph{equipacked}) if their local packing densities with respect to a given Coxeter cell are equal; otherwise they are of \emph{different type}.  

For instance, the family of horoballs centered at $A_0$ and passing through $S=(1,0,0,s)$ with varying $s \in (-1,1)$ contains infinitely many types relative to a fixed Coxeter simplex with ideal vertex at $A_0$. Thus the set of horoballs centered at the same ideal vertex form a  one-parameter family.

\medskip

\noindent\textbf{Bolyai’s formulas.}
The intrinsic geometry of a horosphere is Euclidean. The hyperbolic length of a horospherical arc of Euclidean chord length $x$ is
\begin{equation}
L(x)=2\sinh\!\left(\tfrac{x}{2}\right).
\label{eq:horo_dist}
\end{equation}
If $A$ is a polygonal region on a horosphere of Euclidean area $\mathcal{A}$, then the volume of the horoball sector $\mathcal{H}(A)$ bounded by $A$ and the geodesic segments from $A$ to the horoball center is
\begin{equation}
\operatorname{vol_{\bbH^3}}(\mathcal{H}(A)) = \tfrac{1}{2}\,\mathcal{A}.
\label{eq:bolyai}
\end{equation}
These results, already known to J.~Bolyai in the 19th century, can also be found in modern expositions of hyperbolic geometry (see e.g.~\cite{Rat} or \cite{B78}).

%%%%%%%%%%%%%%%%%%%%%%%%%%%%%%%%%%%%%%%%%
\subsection{The Busemann function in $\overline{\bbH}^3$}
\label{busemann}
%%%%%%%%%%%%%%%%%%%%%%%%%%%%%%%%%%%%%%%%%

The Busemann function (cf.~\cite{Busemann, burger2013rigidity, KT}) on $\overline{\bbH}^3$ is the map
\[
\beta: \bbH^3 \times \bbH^3 \times  \partial \bbH^3 \longrightarrow \bbR, 
\qquad \beta(x,y,\xi)=\lim_{z \rightarrow \xi} \left(d(x,z) - d(y,z)\right),
\]
where the limit $z \to \xi$ is taken along any geodesic in $\bbH^3$ ending at boundary point $\xi$.  
The Busemann function satisfies antisymmetry and the cocycle property:  
\[
\beta(x, x, \xi) = 0, \quad 
\beta(x, y,\xi) = - \beta(y, x,\xi), \quad 
\beta(x, y,\xi) + \beta( y, z,\xi) = \beta(x, z,\xi),
\]
for all $x,y,z \in \bbH^3$, and it is invariant under the action of $\text{Isom}(\bbH^3)$.  

A horosphere centered at $\xi$ through $o$ is the level set
\[
\Hor_{\xi}(o) = \{x \in \bbH^3 \mid \beta(x, o,\xi) = 0 \},
\]
while a horoball is the corresponding sublevel set
\[
\mathring{\Hor}_{\xi}(o) = \{x \in \bbH^3 \mid \beta(x, o,\xi) \le 0 \}.
\]
The space of all horospheres $\Hor(\bbH^3)$ carries an $\bbR$-fibration 
\[
h: \Hor(\bbH^3) \longrightarrow \partial \bbH^3, \qquad \Hor_{\xi}(o) \mapsto \xi,
\]
so that the Busemann function gives an oriented distance between two concentric horospheres $\Hor_{\xi}(o_1)$ and $\Hor_{\xi}(o_2)$.  

\begin{lemma}
\label{lemma:s-parameter}
Fix reference point $o=(1,0,0,0) \in \bbH^3$ and boundary point $\xi=(1,0,0,1) \in \partial \bbH^3$.  
For $x \in \bbH^3$, the associated horosphere $\Hor_{\xi}(x)$ has {\em $s$-parameter}
\[
s(x) \;=\; \tanh\!\big(\beta(o,x,\xi)\big).
\]
\end{lemma}
\noindent
\begin{proof}
Follows from the definitions while taking $z_t$ along the geodesic ray $[o,\xi)$.
Working in the Cayley--Klein model (affine chart $x_0=1$).  
Let $S=(1,0,0,s) \in \Hor_{\xi}(x)$ with $s\in(-1,1)$ and $z_t=(1,0,0,t)$ with $t\in(-1,1)$ tending to $\xi$ as $t\rightarrow 1$.
Using the distance formula \eqref{prop_dist} we compute
\[
\cosh d(S,z_t)=\frac{1-st}{\sqrt{(1-s^2)(1-t^2)}},
\qquad
\cosh d(o,z_t)=\frac{1}{\sqrt{1-t^2}}.
\]
Hence, as $t\rightarrow 1$,
\[
d(S,z_t)-d(o,z_t)
=\operatorname{arcosh}\!\left(\frac{1-st}{\sqrt{(1-s^2)(1-t^2)}}\right)
-\operatorname{arcosh}\!\left(\frac{1}{\sqrt{1-t^2}}\right).
\]
Using $\operatorname{arcosh}u=\log\!\big(u+\sqrt{u^2-1}\big)$ and the asymptotic
$\operatorname{arcosh}u=\log(2u)+o(1)$ as $u\to\infty$, we obtain
\[
\lim_{t\rightarrow 1}\big(d(S,z_t)-d(o,z_t)\big)
=\log\!\left(\frac{1-st}{\sqrt{1-s^2}}\right)\Big|_{t=1}
=\tfrac12\log\!\left(\frac{1-s}{1+s}\right).
\]
By definition,
\(
\beta(S,o,\xi)=\lim_{t\rightarrow 1}\big(d(S,z_t)-d(o,z_t)\big)
=\tfrac12\log\!\left(\frac{1-s}{1+s}\right),
\)
so
\[
\beta(o,S,\xi)=-\beta(S,o,\xi)=\tfrac12\log\!\left(\frac{1+s}{1-s}\right)
=\operatorname{arctanh}(s).
\]
Hence for any $x \in \Hor_{\xi}(x)$, \(s(x)=\tanh\!\big(\beta(o,x,\xi)\big)\) as claimed.
\end{proof}

\noindent
Thus the $s$-parameter measures horospheres centered at $\xi$ relative to the reference horosphere $\Hor_{\xi}(o)$, and coincides (up to scaling) with the projective $s$-parameter introduced above in \S\ref{ss:projective_model}.

\medskip
A choice of basepoint $o \in \bbH^3$ provides a trivialization of the fibration 
according to the diagram
\begin{center}
\begin{tikzcd}
\Hor(\bbH^3) \arrow{r}{\varphi_o} \arrow{d}[swap]{h} & \partial \bbH^3 \times \bbR \arrow{ld}{\pi}  \\
 \partial \bbH^3
\end{tikzcd}
\end{center}
where $\Hor_{\xi}(x) \mapsto (\xi, \beta(o,x,\xi))$.  
An isometry $g \in \text{Isom}(\bbH^3)$ acts on a horosphere as an additive cocycle:
\begin{align*}
g.\Hor_{\xi}(x) = \Hor_{g.\xi}(gx) 
&\mapsto \big( g\xi, \beta(o, gx, g\xi)\big) 
   = \big( g\xi, \beta(g^{-1}o, x, \xi)\big) \\
&= \big( g\xi, \beta(o, x,\xi) + \beta(g^{-1}o, o, \xi)\big).
\end{align*}
Letting $\hat s = \operatorname{arctanh}(s)$, the action in the trivialization is
\[
g(\xi,\hat s) = \big(g\xi, \hat s+\beta(g^{-1}o,o,\xi)\big).
\]

\medskip
In summary, Busemann functions describe horoballs and their $s$-parameters in a way that is invariant under isometries of $\overline{\bbH}^3$, and hence provide a natural tool for studying packing configurations relative to a marked basepoint $o$.

%%%%%%%%%%%%%%%%%%%%%%%%%%%%%%%%%%%%%%%%%%%
%%%%%%%%%%%%%%%%%%%%%%%%%%%%%%%%%%%%%%%%%%%
\section{Packing Density in the Projective Model}
\label{s:lemmas}
%%%%%%%%%%%%%%%%%%%%%%%%%%%%%%%%%%%%%%%%%%%
%%%%%%%%%%%%%%%%%%%%%%%%%%%%%%%%%%%%%%%%%%%

In this section we define packing density and collect three Lemmas used in Section \ref{s:densities} to find the optimal packing densities for the Koszul simplex tilings. 

Let $\cT$ be a Coxeter tiling of $\overline{\mathbb{H}}^3$ \cite{JKRT2}. The symmetry group of a Coxeter tiling $\Gamma_\cT$ 
 contains its Coxeter group, and an isometry between two cells in $\cT$ preserves the tiling.
Any simplex cell of $\cT$ acts as a fundamental domain $\cF_{\cT}$
of $\Gamma_{\cT}$, and the Coxeter group is generated by reflections on the $2$-dimensional faces of $\cF_{\cT}$. 
We restrict to noncompact (Koszul-type) Coxeter simplices, i.e. simplices with one or more ideal vertex, so that the orbifold $\mathbb{H}^3/\Gamma_{\cT}$ has at least one cusp. Table \ref{table:simplex_list} lists the 23 Koszul-type Coxeter simplices in $\overline{\mathbb{H}}^3$ and their hyperbolic volumes. For a detailed discussion of the volume formulae the hyperbolic Coxeter simplices of higher dimensions $n \geq 3$, see Johnson {\it et al.} \cite{JKRT}. 

\begin{definition}[Horoball packing associated to a Coxeter tiling]
\label{def:horoball_packing}
Let $\cT$ be a Coxeter tiling with fundamental simplex $\cF_{\cT}$ and set of ideal vertices $\{v_1, \dots, v_m\}$. A \emph{horoball packing associated to} $\cT$ is a countable $\Gamma_{\cT}$-invariant  collection of horoballs $\cB_{\cT}$ which restricts to the set $\cB_{\cF_{\cT}} = \{B_1, \dots, B_m\}$ on $\cF_{\cT}$such that:
\begin{enumerate}
\item each $B_i$ is centered at $v_i$;
\item horoballs do not overlap, i.e. interiors are pairwise disjoint;
\item each $B_i$ is contained in the half-space bounded by the facet of $\cF_{\cT}$ opposite $v_i$.
\end{enumerate} 
\end{definition}
\noindent
The third condition ensures that the configuration extends consistently by $\Gamma_{\cT}$ to a packing $\cB_{\cT}$ of all $\overline{\mathbb H}^3$, in other words $\cB_{\cT}$ is invariant under the actions of $\Gamma_\cT$. 

The density of such a packing is defined by
\begin{equation}
\delta(\mathcal{B}_{\cT})\;=\;\frac{\sum_{i=1}^m vol_{\mathbb{H}^3}(B_i \cap \cF_{\cT})}{vol_{\mathbb{H}^3}(\cF_{\cT})}.
\label{eq:density}
\end{equation}

We allow horoballs of different types at each ideal vertex of the tiling. The optimal horoball packing density is
\begin{equation}
\delta_{opt}(\cT) \;=\; \sup\limits_{\mathcal{B}_{\cT} \text{~packing}} \delta(\mathcal{B}_{\tau}).
\label{eq:opt_density}
\end{equation}

Now let $\cF_{\Gamma}$ denote the simplicial fundamental domain of Coxeter tiling $\cT_{\Gamma}$ with vertex set $\{A_i\}_{i=0}^3 \in \mathbb{P}(\bbR^4)$, where $A_0=(1, 0, 0, 1)$ is ideal and $A_1 = o = (1, 0, 0, 0)$ is the center of the model. The remaining vertices $A_2, A_3$ are chosen according to the Coxeter diagram of $\cF_{\Gamma}$ in Table \ref{table:simplex_list} (see Tables \ref{table:data_336_main}--\ref{table:other}). We write $\Bu_i$ for the facet opposite to vertex $A_i$.

Lemma \ref{lem:loc} gives the local packing density in the case of a single ideal vertex.
Packing density is maximized by the largest horoball type admissible in cell $\cF_{\Gamma}$ centered at $A_0$. Let $\mathcal{B}_0(s)$ denote the 1-parameter family of horoballs centered at $A_0$ where 
$s$-parameter plays the role of the ``radius" of the horoball, the minimal Euclidean signed distance between the horoball and the center of the model $o$, taken negative if the horoball contains the model center.

\begin{lemma} [Local horoball density]
\label{lem:loc}
The local optimal horoball packing density of simply asymptotic Coxeter simplex $\cF_{\Gamma}$ is
\[
\delta_{opt}(\Gamma) = \frac{vol_{\mathbb{H}^3}(\mathcal{B}_0 \cap \cF_{\Gamma})}{vol_{\mathbb{H}^3}(\cF_{\Gamma})}.
\]
\end{lemma}

\begin{proof} The maximal horoball $\mathcal{B}_0(s)$ opposite $A_0$ with fundamental domain $\cF_{\Gamma}$ is tangent to the face of the simplex given by $\Bu_0$. This tangent point of $\mathcal{B}_0(s)$ and hyperface $\Bu_0$ is $[\mathbf{f}_0]$ the projection of vertex $A_0$ on plane $\Bu_0$ given by,
\begin{equation}
\mathbf{f}_0 =\ba_0 - \frac{\langle \ba_0, \mathbf{u}_0 \rangle}{\langle \Bu_0,\Bu_0 \rangle} 
\mathbf{u}_0.
\label{eq:u4fp}
\end{equation}

The value of the $s$-parameter for the maximal horoball can be read from the equation of the horosphere through $A_0$ and $\mathbf{f}_0$. 
The intersections $[\bh_i]$ of horosphere $\partial \mathcal{B}_0$ and the edges of the simplex $\cF_{\Gamma}$ are found by parameterizing the edges $\bh_i(\lambda) = \lambda \ba_0 +\ba_i$ $(1 \leq i \leq 3)$ then finding their intersections with $\partial \mathcal{B}_0$. 
The volume of the horospherical triangle determines the volume of the horoball piece by equation \eqref{eq:bolyai}.
The data for the horospherical triangle is obtained by finding hyperbolic distances $l_{ij}$ via equation \eqref{prop_dist},
$l_{ij} = d(H_i, H_j)$ where $d(\bh_i,\bh_j)= \arccos\left(\frac{-\langle \bh_i, \bh_j 
\rangle}{\sqrt{\langle \bh_i, \bh_i \rangle \langle \bh_j, \bh_j \rangle}}\right)$.
Moreover, the horospherical distances $L_{ij}$ are found by formula \eqref{eq:horo_dist}.
The intrinsic geometry of a horosphere is Euclidean, so the 
Cayley-Menger determinant gives the Euclidean volume $\mathcal{A}$ of the horospheric $2$-simplex $\mathcal{A}$,

\begin{equation}
\mathcal{A}^2 = \frac{1}{16}
\begin{vmatrix}
 0 & 1 & 1 & 1 \\
 1 & 0 & L_{1,2}^2 & L_{1,3}^2 \\
 1 & L_{1,2}^2 & 0 & L_{2,3}^2  \\
 1 & L_{1,3}^2 & L_{2,3}^2 &  0
 \end{vmatrix}.
 \end{equation}

The volume of the horoball piece contained in the fundamental simplex is

\begin{equation}
vol_{\mathbb{H}^3}(\mathcal{B}_0 \cap \cF_{\Gamma}) = \frac{1}{2}\mathcal{A}.
\end{equation}

The locally optimal horoball packing density of Coxeter Simplex $\cF_{\Gamma}$ is

\begin{equation}
\delta_{opt}(\cF_{\Gamma}) = \frac{vol_{\mathbb{H}^3}(\mathcal{B}_0 \cap \cF_{\Gamma})}{vol_{\mathbb{H}^3}(\cF_{\Gamma})}.
\end{equation}
\end{proof}

\begin{lemma}
\label{lem:glob}
The optimal horoball packing density $\delta_{opt}(\Gamma)$ of tiling $\cT_{\Gamma}$ and the local horoball packings density $\delta_{opt}(\cF_{\Gamma})$ are equal.
\end{lemma}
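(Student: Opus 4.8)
The plan is to prove the two inequalities $\delta_{opt}(\cF_\Gamma) \le \delta_{opt}(\Gamma)$ and $\delta_{opt}(\Gamma) \le \delta_{opt}(\cF_\Gamma)$ separately. The first is immediate: the maximal horoball $\mathcal{B}_0$ constructed in Lemma \ref{lem:loc} is tangent to the opposite face $\Bu_0$ and therefore determines an admissible $\Gamma$-invariant packing of the whole tiling $\cT_\Gamma$, so its density $\delta_{opt}(\cF_\Gamma) = vol(\mathcal{B}_0 \cap \cF_\Gamma)/vol(\cF_\Gamma)$ is one of the values over which the supremum in \eqref{eq:opt_density} is taken, hence $\delta_{opt}(\cF_\Gamma) \le \delta_{opt}(\Gamma)$. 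The content of the lemma is the reverse inequality, for which I would first reduce an arbitrary admissible packing to a one-parameter problem. Since $\cF_\Gamma$ carries the single ideal vertex $A_0$, every ideal vertex of $\cT_\Gamma$ lies in the $\Gamma$-orbit of $A_0$; invariance of the packing under $\Gamma$ then forces all horoballs to share one common type, so any admissible packing is a member of the family $\mathcal{B}_0(s)$, and by periodicity its density over $\bbH^3$ equals the fundamental-domain density \eqref{eq:density} with $m=1$, namely $vol(\mathcal{B}_0(s) \cap \cF_\Gamma)/vol(\cF_\Gamma)$.

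The key step is monotonicity: I would show that $vol(\mathcal{B}_0(s) \cap \cF_\Gamma)$ increases strictly as the horoball grows (as the $s$-parameter decreases). Passing to the upper half-space model with $A_0$ at infinity, the three faces of $\cF_\Gamma$ meeting at $A_0$ become vertical planes cutting out a fixed Euclidean triangle in every horizontal horosphere $\{z=c\}$, whose induced metric is the Euclidean metric scaled by $c^{-2}$. Hence the horospherical area $\mathcal{A}(c)$ of the cross-sectional triangle equals $c^{-2}$ times a constant, and by Bolyai's formula \eqref{eq:bolyai} the piece volume $\tfrac12 \mathcal{A}(c)$ increases strictly as the horoball descends. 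Translating back through the monotone correspondence between $c$ and the $s$-parameter, $\delta(\mathcal{B}_0(s))$ is strictly monotone in the horoball size, so the density is maximized by taking the horoball as large as admissibility permits.

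It remains to identify the binding constraint. For a single orbit of ideal vertices the only obstruction to enlarging $\mathcal{B}_0(s)$ is that it not cross the opposite face $\Bu_0$: the reflection of $\mathcal{B}_0$ in $\Bu_0$ is precisely the horoball at the neighboring ideal vertex, so tangency of $\mathcal{B}_0$ to $\Bu_0$ at $\mathbf{f}_0$ is exactly the extremal non-overlapping position, and any larger horoball violates the packing condition. Thus the supremum in \eqref{eq:opt_density} is attained at the tangent horoball of Lemma \ref{lem:loc}, giving $\delta_{opt}(\Gamma) \le \delta_{opt}(\cF_\Gamma)$ and, with the first inequality, equality. I expect the main obstacle to be the rigorous justification that tangency to $\Bu_0$ (rather than overlap with some farther image of $\mathcal{B}_0$, or truncation by one of the faces through $A_0$) is genuinely the first constraint encountered as the horoball grows; this is where one must use that the $\cF_\Gamma$-copies around $A_0$ tile a full neighborhood of the cusp and that the faces through $A_0$ are asymptotically parallel to the horoball and so never cut it off.
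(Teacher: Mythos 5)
Your argument is correct, and it is considerably more complete than what the paper actually writes: the paper's entire proof of this lemma is the two-sentence assertion that the local construction of Lemma \ref{lem:loc} is preserved by the isometries $g \in \Gamma$ and therefore extends from $\cF_{\Gamma}$ to all of $\cT_{\Gamma}$. That assertion really only covers your easy direction, $\delta_{opt}(\cF_\Gamma) \le \delta_{opt}(\Gamma)$; the reverse inequality is left implicit. You supply the missing content in three steps: transitivity of $\Gamma$ on the ideal vertices (valid here because the lemma is invoked only for simply asymptotic simplices, so there is a single orbit) reduces any admissible packing to the one-parameter family $\mathcal{B}_0(s)$; the upper half-space computation shows the piece volume $\mathrm{area}_E(T)/(2c^2)$ is strictly increasing as the horoball grows, which is consistent with Bolyai's formula \eqref{eq:bolyai} and is the single-horoball analogue of Lemma \ref{lemma:szirmai}; and tangency to $[\Bu_0]$ is the binding constraint. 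On your last worry, note that the paper's definition of an admissible packing in Section \ref{s:lemmas} already \emph{requires} that no horoball extend beyond the face opposite its center, so the maximality of the tangent horoball is essentially definitional in this framework rather than something needing a separate geometric argument; your observation that the reflection of $\mathcal{B}_0$ in $[\Bu_0]$ is the neighboring horoball shows this convention is not artificially restrictive, and the faces through $A_0$ never bound the horoball's size since they pass through its ideal center. In short, your proof is a rigorous expansion of an argument the paper merely sketches.
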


\begin{proof}
The local construction the the proof of Lemma \ref{lem:loc} is preserved by the isometric actions of $g \in \Gamma$. The Coxeter group $\Gamma$ extends the optimal local horoball packing density from the fundamental domain $\cF_{\Gamma}$ to the entire tiling $\cT_{\Gamma}$ of $\overline{\mathbb{H}}^3$, that is
$\delta_{opt}(\Gamma) = \delta_{opt}(\cF_{\Gamma})  = \frac{vol_{\mathbb{H}^3}(\mathcal{B}_0 \cap \cF_{\Gamma})}{vol_{\mathbb{H}^3}(\cF_{\Gamma})}$.
\end{proof}

Next, Lemma \ref{lemma:szirmai} gives the relationship of the sum of volumes of two tangent horoball pieces 
centered at two ideal vertices of the fundamental domain as the horoball types are varied while remaining tangent.

% V2

\begin{lemma}[\cite{Sz12}]
\label{lemma:szirmai}
Let $\cC_1$ and $\cC_2$ be two congruent convex cones in $\overline{\mathbb{H}}^3$ with ideal vertices $\xi_1,\xi_2\in \partial \overline{\mathbb{H}}^3$ that share the geodesic $(\xi_1,\xi_2)$ on their boundaries.  
Suppose $B_1(x)$ and $B_2(x)$ are two horoballs centered at $\xi_1$ and $\xi_2$ respectively, tangent at point $I(x)\in (\xi_1,\xi_2)$.
Define the reference point $I(0)$ as the midpoint along $(\xi_1,\xi_2)$ with the volumes of the horoball sectors satisfying 
\[
V(0) = 2 \; vol_{\mathbb{H}^3}(B_1(0) \cap \cC_1) = 2 \; vol_{\mathbb{H}^3}(B_2(0) \cap \cC_2).
\]  
Define the sum of contributions
\[
V(x)=vol_{\mathbb{H}^3}(B_1(x)\cap \cC_1)+vol_{\mathbb{H}^3}(B_2(x)\cap \cC_2).  
\]
If $x$ denotes the hyperbolic displacement of the tangent point $I(x)$ from a reference $I(0)$, then
\[
V(x)\;=\;V(0)\frac{e^{2x}+e^{-2x}}{2} \;=\; V(0)\cosh(2x),
\]
and $V(x)$ is strictly convex and strictly increasing as $x\to\pm\infty$.
\end{lemma}

\begin{proof}[Idea of proof]
The two tangent horoball pieces are exchanged by reflecting across the midpoint of $(\xi_1, \xi_2)$. Varying the tangency point along this edge multiplies one volume by $e^{2x}$ and the other by $e^{-2x}$, keeping their product fixed.  
Adding the two contributions gives the $\cosh(2x)$ law.  
See our paper \cite{Sz12} for details.
\end{proof}

%%%%%%%%%%%%%%%%%%%%%%%%%%%%%%%%%%%%%%%%%%%
%%%%%%%%%%%%%%%%%%%%%%%%%%%%%%%%%%%%%%%%%%%
\section{The Optimal Packing Densities of the Koszul Simplex Tilings}
\label{s:densities}
%%%%%%%%%%%%%%%%%%%%%%%%%%%%%%%%%%%%%%%%%%%
%%%%%%%%%%%%%%%%%%%%%%%%%%%%%%%%%%%%%%%%%%%

In this section we determine the optimal horoball packing densities of the Koszul-type Coxeter simplex tilings in $\overline{\bbH}^3$. 
Table \ref{table:simplex_list} summarizes these tilings together with their optimal packing densities, and Fig.~\ref{fig:lattice_of_subgroups_H3} displays the lattice of subgroups corresponding to the different commensurability classes. 
Throughout, we use Witt symbols to denote the various groups $\Gamma$.

%%%%%%%%%%%%%%%%%%%%%%%%%%%%%%%%%%%%%%%%%%%%%%%%%%%

\begin{figure}
\[
\begindc{\commdiag}[200]
% Commensurable to [3,3,6]
\obj(6,13)[336]{Commensurable to $[3,3,6]$}
\obj(0,8)[bp]{$\overline{BP}_3$}
\obj(2,10)[bv]{$\overline{BV}_3$}
\obj(2,6)[dp]{$\overline{DP}_3^{(2)}$}
\obj(4,8)[dv]{$\overline{DV}_3^{(2)}$}
\obj(6,12)[vv]{$\overline{V}_3$}
\obj(6,5)[vv2]{$\widehat{VV}_3^{(2)}$}
\obj(8,9)[yy]{$\overline{Y}_3^{(2)}$}
\obj(12,10)[pp]{$\overline{P}_3$}
\obj(12,7)[zz]{$\overline{Z}_3^{(2)}$}
\obj(10,6)[vp]{$\overline{VP}_3^{(3)}$}
\obj(10,4)[pp2]{$\overline{PP}_3^{(4)}$}

\mor{bp}{bv}{$2$}[\atright, \solidline]
\mor{bv}{dv}{$2$}[\atright, \solidline]
\mor{bp}{dp}{$2$}[\atright, \solidline]
\mor{dv}{dp}{$2$}[\atright, \solidline]
\mor{vv}{dv}{$5$}[\atright, \solidline]
\mor{vv}{yy}{$4$}[\atright, \solidline]
\mor{yy}{vv2}{$5$}[\atright, \solidline]
\mor{dv}{vv2}{$4$}[\atright, \solidline]
\mor{yy}{vp}{$3$}[\atright, \solidline]
\mor{vp}{pp2}{$2$}[\atright, \solidline]
\mor{vv}{pp}{$2$}[\atleft, \solidline]
\mor{vv}{zz}{$6$}[\atleft, \solidline]
\mor{zz}{vp}{$2$}[\atright, \solidline]

%Commensurable to [4,4,3]
\obj(18,13)[344]{Commensurable to $[3,4,4]$}
\obj(16,12)[rr]{$\overline{R}_3$}
\obj(16,10)[oo]{$\overline{O}_3$}
\obj(16,8)[br]{$\widehat{BR}_3$}
\obj(19,10)[nn]{$\overline{N}_3^{(2)}$}
\obj(19,8)[mm]{$\overline{M}_3^{(3)}$}
\obj(19,6)[rr2]{$\widehat{RR}_3^{(4)}$}

\mor{rr}{oo}{$2$}[\atright, \solidline]
\mor{oo}{br}{$2$}[\atright, \solidline]
\mor{rr}{nn}{$3$}[\atleft, \solidline]
\mor{nn}{mm}{$2$}[\atright, \solidline]
\mor{mm}{rr2}{$2$}[\atright, \solidline]

%Commensurable to [5,3,6]
\obj(4,3)[546]{Commensurable to $[5,3,6]$}
\obj(4,2)[hv]{$\overline{HV}_3$}
\obj(4,0)[hp]{$\overline{HP}_3$}

\mor{hv}{hp}{$2$}[\atright, \solidline]

%Other Non arithmetic
\obj(9,1)[cv]{$\widehat{CR}_3^{(2)}$}
\obj(13,1)[av]{$\widehat{AV}_3^{(2)}$}
\obj(17,1)[bv]{$\widehat{BV}_3^{(2)}$}
\obj(21,1)[hv]{$\widehat{HV}_3^{(2)}$}

\enddc
\]
\caption{Subgroup relations of the 23 paracompact (Koszul simplex) Coxeter groups in $\overline{\bbH}^3$, 17 arithmetic and 6 nonarithmetic. The superscript indicates the number of ideal vertices.}
\label{fig:lattice_of_subgroups_H3}
\end{figure}

\subsection{Commensurability Class of $[3,3,6]$}

The eleven groups in this class include the symmetries of the totally asymptotic hyperbolic tetrahedron and cube. 
We express their optimal packing densities in terms of $\Lambda(\tfrac{\pi}{3})$. 

\begin{proof}[Proof of Theorem \ref{thm:336}]
We distinguish four cases based on the number of ideal vertices. 
The relevant data is collected in Tables \ref{table:data_336_main}--\ref{table:data_two}. 
\begin{enumerate}
\item Coxeter simplices $\cF_{\Gamma}$ with $\Gamma \in \{\overline{V}_3, \overline{P}_3, \overline{BV}_3, \overline{BP}_3\}$ have a single ideal vertex (see Fig.~\ref{fig:lattice_of_subgroups_H3}). 
The local optimal packing densities follow from Lemma \ref{lem:loc} and extend to $\bbH^3$ by Lemma \ref{lem:glob}. 
Coordinates for $\{A_i\}$, hyperplanes $\{\Bu_i\}$ opposite each $A_i$, $s$-parameters for optimal horoballs, and intersection point coordinates are listed in Tables \ref{table:data_336_main} and \ref{table:data_336_one}. 

\item Coxeter simplices $\cF_{\Gamma}$ with $\Gamma \in \{\overline{Y}_3, \overline{Z}_3, \overline{DV}_3, \overline{DP}_3, \widehat{VV}_3\}$ have two ideal vertices. 
We defer the details of this case to Theorem \ref{thm:other}, which considers the nonarithmetic cases; the data used for the computations is given in Table \ref{table:data_two}. 
See also \cite[\S 4.2, \S 4.4]{KSz} for earlier versions of the argument in cases $\overline{Y}_3$ (denoted $\mathcal O_{(3,6,3)}$) and $\overline{DV}_3$.

\item Case $\Gamma = \overline{VP}_3$ has three ideal vertices. 
Following the method of \cite[\S 3.2.2--3.2.3]{KSz18}, the necessary data is given in Table \ref{table:data_336_main}.

\item Case $\Gamma = \overline{PP}_3$ corresponds to the ideal regular simplex and is a main result of \cite[\S 4.2]{KSz}.
\end{enumerate}
\end{proof}

For related techniques in {\it domain doubling} cases of fundamental domains, see \cite[\S 3.2.4]{KSz14} and \cite[\S 3]{KSz14}. 
Fig. \ref{fig:336_structure} illustrates the optimal packing structures in terms of the ratio of the volume of each horoball within a fundamental simplex cell.

\begin{figure}
\[
\begindc{\commdiag}[200]
% Commensurable to [3,3,6]
\obj(4,6)[dp]{$(\frac{1}{5},\frac{4}{5})_{\overline{DP}_3^{(2)}}$}
\obj(4,8)[dv]{$(\frac{1}{5},\frac{4}{5})_{\overline{DV}_3^{(2)}}$}
\obj(8,12)[vv]{$(\frac{1}{1})_{\overline{V}_3}$}
\obj(8,9)[yy]{$(\frac{1}{4},\frac{3}{4})_{\overline{Y}_3^{(2)}}$}
\obj(13,10)[pp]{$(\frac{1}{1})_{\overline{P}_3}$}
\obj(12,8)[zz]{$(\frac{1}{2},\frac{1}{2})_{\overline{Z}_3}$}
\obj(8,6)[vp]{$(\frac{1}{12},\frac{1}{6}, \frac{3}{4})_{\overline{VP}_3}$}
\obj(12,6)[vp2]{$(\frac{1}{4},\frac{1}{4}, \frac{1}{2})_{\overline{VP}_3}$}
\obj(8,4)[pp2]{$(\frac{1}{12},\frac{1}{12}, \frac{1}{12}, \frac{3}{4})_{\overline{PP}_3}$}
\obj(12,4)[pp3]{$(\frac{1}{4},\frac{1}{4}, \frac{1}{4}, \frac{1}{4})_{\overline{PP}_3}$}

\mor{dv}{dp}{$2$}[\atright, \solidline]
\mor{vv}{dv}{$5$}[\atright, \solidline]
\mor{vv}{yy}{$4$}[\atright, \solidline]
\mor{yy}{vp}{$3$}[\atright, \solidline]
\mor{vp}{pp2}{$2$}[\atright, \solidline]
\mor{vp2}{pp3}{$2$}[\atright, \solidline]
\mor{vv}{pp}{$2$}[\atleft, \solidline]
\mor{vv}{zz}{$6$}[\atleft, \solidline]
\mor{zz}{vp2}{$2$}[\atright, \solidline]

\enddc
\]
\caption{Classification of the optimal horoball packings in the commensurability class $[3,3,6]$, organized by the ratio of the contributions of the individual horoballs as fractions of $\Theta$.}
\label{fig:336_structure}
\end{figure}

\subsection{Commensurability Class of $[3,4,4]$}

This commensurability class includes the symmetries of the hyperbolic ideal regular octahedron, horoball packings of which were studied in detail in \cite[\S 4.3]{KSz}. 

\begin{proof}[Proof of Theorem \ref{thm:344}]
The proof follows the same methods as in Theorem \ref{thm:336}. 
The relevant data is collected in Tables \ref{table:data_344_one} and \ref{table:data_nml}. 
Note that the groups $\overline{N}_3$, $\overline{M}_3$, and $\widehat{RR}_3$, which have multiple ideal vertices, form a domain doubling sequence \cite{KSz14, EGM}. 
\end{proof}

For earlier partial results, see \cite[\S 4.3]{KSz}. 
Fig. \ref{fig:lattice_of_subgroups_344} illustrates the optimal packing structures in terms of the volume ratios of horoballs in a fundamental simplex cell. 

\begin{figure}
\[
\begindc{\commdiag}[200]
%Commensurable to [4,4,3]
\obj(0,12)[rr]{$(\frac{1}{1})_{\overline{R}_3}$}
\obj(0,10)[oo]{$(\frac{1}{1})_{\overline{O}_3}$}
\obj(0,8)[br]{$(\frac{1}{1})_{\widehat{BR}_3}$}
\obj(3,10)[nn]{$(\frac{2}{3},\frac{1}{3})_{\overline{N}_3}$}
\obj(3,8)[mm]{$(\frac{2}{3},\frac{1}{6}, \frac{1}{6})_{\overline{M}_3}$}
\obj(3,6)[rr2]{$(\frac{2}{3},\frac{1}{6}, \frac{1}{12}, \frac{1}{12})_{\widehat{RR}_3}$}

\mor{rr}{oo}{$2$}[\atright, \solidline]
\mor{oo}{br}{$2$}[\atright, \solidline]
\mor{rr}{nn}{$3$}[\atleft, \solidline]
\mor{nn}{mm}{$2$}[\atright, \solidline]
\mor{mm}{rr2}{$2$}[\atright, \solidline]
\enddc
\]
\caption{Classification of the optimal horoball packings in the commensurability class $[3,4,4]$, organized by the ratio of the contributions of the individual horoballs as fractions of $\Theta$.}
\label{fig:lattice_of_subgroups_344}
\end{figure}

\subsection{Nonarithmetic Cases}

There are six nonarithmetic asymptotic Coxeter simplices in five commensurability classes; see Table \ref{table:simplex_list}.  

\subsubsection{The commensurability class of $[5,3,6]$}
This commensurability class contains the symmetries of the asymptotic hyperbolic dodecahedron, cf.~\cite[\S 5]{KSz}. 
Both Coxeter simplices in this class have a single ideal vertex, see Table \ref{table:data_536}. 
Their volumes can be expressed in terms of the Lobachevsky function using orthoscheme decompositions following \cite{JKRT}: 
\[
vol_{\bbH^3}(\overline{HV}_3)  = \tfrac{1}{2}\Lambda\!\left(\tfrac{\pi}{3}\right)
+ \tfrac{1}{4}\Lambda\!\left(\tfrac{\pi}{3}\right)
- \tfrac{1}{4}\Lambda\!\left(\tfrac{\pi}{30}\right), 
\]
and 
\[
vol_{\bbH^3}(\overline{HP}_3) = 2 \cdot vol_{\bbH^3}(\overline{HV}_3).
\]

\begin{proof}[Proof of Theorem \ref{thm:536}]
Both Coxeter simplices $\cF_{\Gamma}$ with $\Gamma \in \{\overline{HV}_3, \overline{HP}_3\}$ have a single ideal vertex. 
The local optimal packing densities follow from Lemma \ref{lem:loc} and extend to the entire space by Lemma \ref{lem:glob}. 
Coordinates for the vertices $\{A_i\}$, hyperplanes $\{\Bu_i\}$ opposite each $A_i$, optimal horoball $s$-parameters, horoball intersection points, and closed-form expressions for the volumes of the horoball intersection pieces with $\cF_{\Gamma}$ are summarized in Table \ref{table:data_536}. 

In particular, Table \ref{table:data_536} implements the computation prescribed by Lemma \ref{lem:loc} step-by-step in coordinates:
\begin{enumerate}[(i)]
 \item compute the pole vectors $\mathbf{u}_i$ of the facets and the projection $\mathbf{f}_0$ of the ideal vertex via \eqref{eq:u4fp};
 \item determine the horosphere equation and read off the optimal $s$–parameter for the maximal admissible horoball;
 \item parametrize the simplex edges and find their intersections $H_i$ with the horosphere;
 \item compute hyperbolic edge distances and convert to horospherical chord lengths $L_{ij}$ using \eqref{prop_dist} and \eqref{eq:horo_dist};
 \item apply the Cayley–Menger determinant to obtain the Euclidean area $\mathcal{A}$ of the horospherical triangle;
 \item obtain the horoball piece volume $vol_{\bbH^3}(\mathcal{B}_0\cap\cF_{\Gamma})=\tfrac12\mathcal{A}$ (cf.~\eqref{eq:bolyai}), and record this contribution for the density formula \eqref{eq:density}.
\end{enumerate}
Thus the table provides a stepwise instantiation of Lemma \ref{lem:loc}, with the row $vol_{\bbH^3}(\mathcal{B}_0\cap\cF_{\Gamma})$ giving the closed-form contributions used to compute the packing density.
\end{proof}

\subsubsection{Commensurability classes of single simplices}
The nonarithmetic Coxeter simplices with cyclic diagrams decompose into orthoschemes, and their volumes are computed using the orthoscheme volume formulas from \cite{JKRT}. 
The decompositions are given below, where $\varphi = \frac{1+\sqrt{5}}{2}$ is the golden ratio: 

\begin{equation}
\vol(\widehat{AV}_3) = \vol([3,3,6]) + \vol([3,4,4]) + \vol([4,4,3]) + \vol([3,6,3]),
\end{equation}

\begin{multline}
\vol(\widehat{BV}_3) = \vol([4,3,6]) + \vol\!\left([4,\arctan\sqrt{2},\arccot\sqrt{2}]\right) \\
+ \vol\!\left([\arccot\tfrac{1}{\sqrt{2}},\tfrac{\pi}{2}-\arctan\sqrt{2},\tfrac{\pi}{3}]\right) + \vol([3,6,3]),
\end{multline}

\begin{multline}
\vol(\widehat{HV}_3) = \vol([5,3,6]) 
+ \vol\!\left([\tfrac{\pi}{5},\arctan \varphi,\arccot\varphi]\right)  \\
+ \vol\!\left([\arccot\varphi^{-1},\tfrac{\pi}{2}-\arctan \varphi,\tfrac{\pi}{3}]\right)  + \vol([3,6,3]),
\end{multline}

\begin{multline}
\vol(\widehat{CR}_3) = \vol([3,4,4]) + \vol([4,4,4]) \\
+ \vol\!\left([\tfrac{\pi}{3},\arctan\tfrac{1}{\sqrt{2}},\arccot\tfrac{1}{\sqrt{2}}]\right)  
+ \vol\!\left([\arccot\sqrt{2},\tfrac{\pi}{2}-\arctan\tfrac{1}{\sqrt{2}},\tfrac{\pi}{4}]\right).
\end{multline}

Numerical values for these volumes are listed in Table \ref{table:simplex_list}. 

\begin{proof}[Proof of Theorem \ref{thm:other}]
Each nonarithmetic simplex in Table \ref{table:other} has two ideal vertices; the relevant horoball parameters, volumes, and densities are collected there.  
We illustrate the argument with the case $\Gamma = \widehat{AV}_3$. 

From Table \ref{table:other}, the simplex $\cF_{\widehat{AV}_3}$ has two ideal vertices $A_0=(1,0,0,1)$ and $A_1=(1,0,-1,0)$.  
Let $B_0(\arctanh s_0)$ and $B_1(\arctanh s_1)$ denote horoballs centered at $A_0$ and $A_1$ with $s$-parameters $s_0,s_1$ as in Lemma \ref{lemma:s-parameter}.  
The horosphere $\partial B_1$ is obtained from $\partial B_0$ by the rotation $\text{Rot}_{A_1A_0} \in \mathrm{PGL}(4,\bbR)$ sending $A_1$ to $A_0$, represented by the matrix
\[
\text{Rot}_{A_1A_0} = 
\begin{pmatrix}
1 & 0 & 0 & 0  \\
0 & 1 & 0 & 0  \\
0 & 0 & 0 & -1 \\
0 & 0 & -1 & 0  
\end{pmatrix}.
\]

Let $x_i = \arctanh s_i = \beta(S_i,o,A_i)$ denote the hyperbolic distance from the model center $o=A_1=(1,0,\dots,0)$ to $S_i=(1,0,\dots,0,s_i)$ (with the appropriate rotation applied in the case of $A_1$).  
From Table \ref{table:other}, if $B_0$ is maximal then $s_0=0$, which forces $s_1=\tfrac{3}{5}$; if $B_1$ is maximal then $s_1=\tfrac{1}{13}(3-4\sqrt{3})$, which forces $s_0=\tfrac{1}{13}(4\sqrt{3}+3)$.  
The corresponding maximal horoballs $B_0(\arctanh 0)$ and $B_1(\arctanh \tfrac{1}{13}(3-4\sqrt{3}))$ are tangent to the hyperfaces $[\Bu_0]$ and $[\Bu_1]$ (see Table \ref{table:other}), and tangent to each other at $H_1$.  

Table \ref{table:other} also records the volumes of the horoball pieces:  
\[
V_{\max \cB_0}=\tfrac{1}{24}(2\sqrt{3}+3), \qquad V_{\cB_1}=\tfrac{1}{16\sqrt{3}}.
\]
By Lemma \ref{lem:loc} applied to each horoball, and Lemma \ref{lem:glob}, the optimal packing density is
\[
\delta_{opt}(\Gamma) 
= \frac{\tfrac{1}{16 \sqrt{3}} + \tfrac{1}{24} (2\sqrt{3}+3)}{\vol(\widehat{AV}_3)} 
\approx 0.838825,
\]
as recorded in the final row of Table \ref{table:other}.  
Finally, Lemma \ref{lemma:szirmai} implies that this value is indeed optimal.  

The same argument applies, mutatis mutandis, to the other three nonarithmetic simplices listed in Table \ref{table:other}, yielding the remaining values of $\delta_{opt}$.
\end{proof}

%%%%%%%%%%%%%%%%%%%%%%%%%%%%%%%%%%%%%%%%%%%
%%%%%%%%%%%%%%%%%%%%%%%%%%%%%%%%%%%%%%%%%%%
\newpage
\appendix
\section{Tables}
%%%%%%%%%%%%%%%%%%%%%%%%%%%%%%%%%%%%%%%%%%%
%%%%%%%%%%%%%%%%%%%%%%%%%%%%%%%%%%%%%%%%%%%

\begin{table}
    \begin{tabular}{cc|c|c|c}
    \hline
    Coxeter & ~ & Witt & Simplex & Packing\\
    Diagram  & Notation & Symbol & Volume &Density\\
    \hline
        Commensurable to $[3,3,6]$ & ~ & ~ & ~ & ~\\
    \hline
  \begin{tikzpicture}
	
	\draw (0.05,0) -- (1.5,0);

	\draw (0,0) circle (.05);
	\draw[fill=black] (.5,0) circle (.05); 
	\draw[fill=black] (1,0) circle (.05);
	\draw[fill=black] (1.5,0) circle (.05);

	\node at (1.25,0.175) {$6$};

\end{tikzpicture}
  & $[3,3,6]$ & $\overline{V}_3$ & $\frac{1}{8}\Lambda\left(\frac{1}{3} \pi \right)$ & $\Theta = \frac{1}{2 \sqrt{3} \Lambda\left(\frac{\pi }{3}\right)}$  \\
      \begin{tikzpicture}
	
	\draw (0.05,0) -- (1.45,0);

	\draw (0,0) circle (.05);
	\draw[fill=black] (.5,0) circle (.05); 
	\draw[fill=black] (1,0) circle (.05);
	\draw (1.5,0) circle (.05);

	\node at (.75,0.175) {$6$};

\end{tikzpicture}
  & $[3,6,3]$ & $\overline{Y}_3$ & $\frac{1}{2}\Lambda\left(\frac{1}{3} \pi \right)$ & $\Theta \approx 0.853276$  \\
      \begin{tikzpicture}
	
	\draw (0.05,0) -- (.5,0);
	\draw (.5,0) -- (0.97,.225);
	\draw (.5,0) -- (0.97,-.225);
	\draw (1,.2) -- (1,-.2);

	\draw (0,0) circle (.05);
	\draw[fill=black] (.5,0) circle (.05); 
	\draw (1,.25) circle (.05);
	\draw (1,-.25) circle (.05);

	\node at (.25,0.175) {$6$};

\end{tikzpicture}
  & $[6,3^{[3]}]$ & $\overline{VP}_3$ & $\frac{3}{2}\Lambda\left(\frac{1}{3} \pi \right)$ & $\Theta$  \\
\begin{tikzpicture}
	
	\draw[shorten <=0.05cm, shorten >=0.05cm] (0,0) -- (-.25,-.2);
	\draw[shorten <=0.05cm, shorten >=0.05cm] (0,0) -- (.25,-.2);
	\draw[shorten <=0.05cm, shorten >=0.05cm] (0,0) -- (0,.35);
	\draw[shorten <=0.05cm, shorten >=0.05cm] (-.25,-.2) -- (0,.35);
	\draw[shorten <=0.05cm, shorten >=0.05cm] (.25,-.2) -- (0,.35);
	\draw[shorten <=0.05cm, shorten >=0.05cm] (-.25,-.2) -- (.25,-.2);
	
	\draw (0,0) circle (.05);
	\draw (0,.35) circle (.05); 
	\draw (-.25,-.2) circle (.05);
	\draw (.25,-.2) circle (.05);
\end{tikzpicture}
  & $[3^{[3,3]}]$ & $\widehat{PP}_3$ & $3\Lambda\left(\frac{1}{3} \pi \right)$ & $\Theta$  \\
      \begin{tikzpicture}
	
	\draw (0.05,0) -- (.5,0);
	\draw (.5,0) -- (1,.25);
	\draw (.5,0) -- (1,-.25);
	\draw (1,.25) -- (1,-.25);

	\draw (0,0) circle (.05);
	\draw[fill=black] (.5,0) circle (.05); 
	\draw[fill=black] (1,.25) circle (.05);
	\draw[fill=black] (1,-.25) circle (.05);

\end{tikzpicture}
  & $[3,3^{[3]}]$ & $\overline{P}_3$ & $\frac{1}{4}\Lambda\left(\frac{1}{3} \pi \right)$ & $\Theta$  \\
   \begin{tikzpicture}
	
	\draw (0.05,0) -- (.45,0);
	\draw (0.55,0) -- (1.5,0);

	\draw (0,0) circle (.05);
	\draw (.5,0) circle (.05); 
	\draw[fill=black] (1,0) circle (.05);
	\draw[fill=black] (1.5,0) circle (.05);

	\node at (1.25,0.175) {$6$};
	\node at (.25,0.175) {$6$};
	
\end{tikzpicture}
  & $[6,3,6]$ & $\overline{Z}_3$ & $\frac{3}{4}\Lambda\left(\frac{1}{3} \pi \right)$ & $\Theta$  \\
     \begin{tikzpicture}
	
	\draw (0.05,0) -- (1.5,0);

	\draw (0,0) circle (.05);
	\draw[fill=black] (.5,0) circle (.05); 
	\draw[fill=black] (1,0) circle (.05);
	\draw[fill=black] (1.5,0) circle (.05);

	\node at (.25,0.175) {$4$};
	\node at (1.25,0.175) {$6$};
	
\end{tikzpicture}
  & $[4,3,6]$ & $\overline{BV}_3$ & $\frac{5}{16}\Lambda\left(\frac{1}{3} \pi \right)$ & $\rho =  \frac{2}{5 \sqrt{3} \Lambda\left(\frac{\pi }{3}\right)}$  \\
      \begin{tikzpicture}
	
	\draw (0.05,0) -- (.5,0);
	\draw (.5,0) -- (1,.25);
	\draw (.5,0) -- (1,-.25);
	\draw (1,.25) -- (1,-.25);

	\draw (0,0) circle (.05);
	\draw[fill=black] (.5,0) circle (.05); 
	\draw[fill=black] (1,.25) circle (.05);
	\draw[fill=black] (1,-.25) circle (.05);
	
	\node at (.25,0.175) {$4$};

\end{tikzpicture}
  & $[4,3^{[3]}]$ & $\overline{BP}_3$ & $\frac{5}{8}\Lambda\left(\frac{1}{3} \pi \right)$ & $\rho \approx  0.682620$ \\
      \begin{tikzpicture}
	
	\draw (0,0) -- (.5,0);
	\draw[shorten <=0.05cm, shorten >=0.05cm]  (.5,0) -- (1,.25);
	\draw[shorten <=0.05cm, shorten >=0.05cm]  (.5,0) -- (1,-.25);

	\draw[fill=black] (0,0) circle (.05);
	\draw[fill=black] (.5,0) circle (.05); 
	\draw (1,.25) circle (.05);
	\draw (1,-.25) circle (.05);

	\node at (.25,0.175) {$6$};

\end{tikzpicture}
  & $[4,3^{[1,1]}]$ & $\overline{DV}_3$ & $\frac{5}{8}\Lambda\left(\frac{1}{3} \pi \right)$ & $\Theta$  \\
      \begin{tikzpicture}
	
	\draw[shorten <=0.05cm, shorten >=0.05cm]  (0,-0.25) -- (0,0.25);
	\draw[shorten <=0.05cm, shorten >=0.05cm]  (.5,0) -- (0,.25);
	\draw[shorten <=0.05cm, shorten >=0.05cm]  (.5,0) -- (0,-.25);
	\draw[shorten <=0.05cm, shorten >=0.05cm]  (-.5,0) -- (0,.25);
	\draw[shorten <=0.05cm, shorten >=0.05cm]  (-.5,0) -- (0,-.25);
	
	\draw[fill=black] (0,0.25) circle (.05);
	\draw[fill=black] (0,-0.25) circle (.05); 
	\draw (-.5,0) circle (.05);
	\draw (.5,0) circle (.05);

\
\end{tikzpicture}
  & $[3^{[~]\times[~]}]$ & $\overline{DP}_3$ & $\frac{5}{4}\Lambda\left(\frac{1}{3} \pi \right)$ & $\Theta$  \\
          \begin{tikzpicture}
	
	\draw[shorten <=0.05cm, shorten >=0.05cm]  (0,0) -- (.5,0);
	\draw[shorten <=0.05cm, shorten >=0.05cm]  (0,0) -- (0,.5);
	\draw[shorten <=0.05cm, shorten >=0.05cm]  (0,.5) -- (.5,.5);
	\draw[shorten <=0.05cm, shorten >=0.05cm]  (.5,0) -- (.5,.5);

	\draw (0,0) circle (.05);
	\draw[fill=black] (.5,0) circle (.05); 
	\draw[fill=black] (0,.5) circle (.05);
	\draw (.5,.5) circle (.05);

	\node at (-.25,0.2) {$6$};
	\node at (.75,0.2) {$6$};

\end{tikzpicture}
  & $[(3,6)^{[2]}]$ & $\widehat{VV}_3$ & $\frac{5}{2}\Lambda\left(\frac{1}{3} \pi \right)$  & $\rho$  \\
          \hline
        Commensurable to $[3,4,4]$ & ~ & ~ & ~ & ~\\
    \hline
        \begin{tikzpicture}
	
	\draw (0.05,0) -- (1.5,0);

	\draw (0,0) circle (.05);
	\draw[fill=black] (.5,0) circle (.05); 
	\draw[fill=black] (1,0) circle (.05);
	\draw[fill=black] (1.5,0) circle (.05);

	\node at (1.25,0.175) {$4$};
	\node at (.75,0.175) {$4$};

\end{tikzpicture}
  & $[3,4,4]$ & $\overline{R}_3$ & $\frac{1}{6}\Lambda\left(\frac{1}{4} \pi \right)$ & $\sigma = \frac{3}{4 C}$ \\
      
      \begin{tikzpicture}
	
	\draw (0.05,0) -- (.5,0);
	\draw (.5,0) -- (1,.25);
	\draw (.5,0) -- (1,-.25);

	\draw (0,0) circle (.05);
	\draw[fill=black] (.5,0) circle (.05); 
	\draw[fill=black] (1,.25) circle (.05);
	\draw[fill=black] (1,-.25) circle (.05);

	\node at (.75,0.275) {$4$};
	\node at (.75,-0.275) {$4$};

\end{tikzpicture}
  & $[3,4^{1,1}]$ & $\overline{O}_3$ & $\frac{1}{3}\Lambda\left(\frac{1}{4} \pi \right)$ & $\sigma \approx  0.818808$  \\
           \begin{tikzpicture}
	
	\draw (0,0) -- (.5,0);
	\draw (0,0) -- (0,.475);
	\draw (0.05,.5) -- (.5,.5);
	\draw (.5,0) -- (.5,.5);

	\draw[fill=black] (0,0) circle (.05);
	\draw[fill=black] (.5,0) circle (.05); 
	\draw (0,.5) circle (.05);
	\draw[fill=black] (.5,.5) circle (.05);

	\node at (0.25,0.175) {$4$};
	\node at (.75,0.2) {$4$};

\end{tikzpicture}
  & $[(3^2,4^2)]$ & $\widehat{BR}_3$ & $\frac{2}{3}\Lambda\left(\frac{1}{4} \pi \right)$  & $\sigma$  \\
        \begin{tikzpicture}
	
	\draw (0.05,0) -- (1.45,0);

	\draw (0,0) circle (.05);
	\draw[fill=black] (.5,0) circle (.05); 
	\draw[fill=black] (1,0) circle (.05);
	\draw (1.5,0) circle (.05);

	\node at (1.25,0.175) {$4$};
	\node at (.75,0.175) {$4$};
	\node at (.25,0.175) {$4$};

\end{tikzpicture}
  & $[4,4,4]$ & $\overline{N}_3$ & $\frac{1}{2}\Lambda\left(\frac{1}{4} \pi \right)$ & $\sigma$ \\
      \begin{tikzpicture}
	
	\draw (.05,0) -- (.5,0);
	\draw (.5,0) -- (.95,.245);
	\draw (.5,0) -- (.95,-.245);

	\draw (0,0) circle (.05);
	\draw[fill=black] (.5,0) circle (.05); 
	\draw (1,.25) circle (.05);
	\draw (1,-.25) circle (.05);

	\node at (.75,0.275) {$4$};
	\node at (.75,-0.275) {$4$};
	\node at (.25,0.175) {$4$};

\end{tikzpicture}
  & $[4^{1,1,1}]$ & $\overline{M}_3$ & $\Lambda\left(\frac{1}{4} \pi \right)$ & $\sigma$ \\
        \begin{tikzpicture}
	
	\draw (.05,0) -- (.45,0);
	\draw (0,.05) -- (0,.45);
	\draw (.05,.5) -- (.45,.5);
	\draw (.5,.05) -- (.5,.45);

	\draw (0,0) circle (.05);
	\draw (.5,0) circle (.05); 
	\draw (0,.5) circle (.05);
	\draw (.5,.5) circle (.05);

	\node at (-.25,0.2) {$4$};
	\node at (.75,0.2) {$4$};
	\node at (.25,0.175) {$4$};
	\node at (.25,0.775) {$4$};

\end{tikzpicture}
  & $[4^{[4]}]$ & $\widehat{RR}_3$ & $2 \Lambda\left(\frac{1}{4} \pi \right)$ & $\sigma$ \\
    \hline
        Nonarithmetic & ~ & ~ & ~ & ~\\
        Commensurable to $[5,3,6]$ & ~ & ~ & ~ & ~\\
    \hline
        \begin{tikzpicture}
	
	\draw (.05,0) -- (1.5,0);

	\draw (0,0) circle (.05);
	\draw[fill=black] (.5,0) circle (.05); 
	\draw[fill=black] (1,0) circle (.05);
	\draw[fill=black] (1.5,0) circle (.05);

	\node at (.25,0.175) {$5$};	
	\node at (1.25,0.175) {$6$};

\end{tikzpicture}
  & $[5,3,6]$ & $\overline{HV}_3$ & $0.17150\dots$ & $ 0.550841\dots$  \\
      \begin{tikzpicture}
	
	\draw (0.05,0) -- (.5,0);
	\draw (.5,0) -- (1,.25);
	\draw (.5,0) -- (1,-.25);
	\draw (1,.25) -- (1,-.25);

	\draw (0,0) circle (.05);
	\draw[fill=black] (.5,0) circle (.05); 
	\draw[fill=black] (1,.25) circle (.05);
	\draw[fill=black] (1,-.25) circle (.05);

	\node at (.25,0.175) {$5$};

\end{tikzpicture}
  & $[5,3^{[3]}]$ & $\overline{HP}_3$ & $0.34300\dots$ & $0.550841\dots$  \\
    \hline
        Other Nonarithmetic & ~ & ~ & ~ & ~\\
    \hline
        \begin{tikzpicture}
	
	\draw (.05,0) -- (.45,0);
	\draw (0,.05) -- (0,.45);
	\draw (.05,.5) -- (.5,.5);
	\draw (.5,0) -- (.5,.5);

	\draw (0,0) circle (.05);
	\draw[fill=black] (.5,0) circle (.05); 
	\draw (0,.5) circle (.05);
	\draw[fill=black] (.5,.5) circle (.05);

	\node at (-.25,0.2) {$~$};
	\node at (.75,0.2) {$6$};

\end{tikzpicture}
  & $[(3^3,6)]$ & $\widehat{AV}_3$ & $0.364107\dots$ & $0.838825\dots$  \\
          \begin{tikzpicture}
	
	\draw (.05,0) -- (.5,0);
	\draw (0,.05) -- (0,.45);
	\draw (.05,.5) -- (.5,.5);
	\draw (.5,0) -- (.5,.5);

	\draw (0,0) circle (.05);
	\draw[fill=black] (.5,0) circle (.05); 
	\draw (0,.5) circle (.05);
	\draw[fill=black] (.5,.5) circle (.05);

	\node at (-.25,0.2) {$4$};
	\node at (.75,0.2) {$6$};

\end{tikzpicture}
  & $[(3,4,3,6)]$ & $\widehat{BV}_3$ & $0.525840\dots$ & $0.747914\dots$  \\
          \begin{tikzpicture}
	
	\draw (.05,0) -- (.45,0);
	\draw (0,.05) -- (0,.5);
	\draw (0,.5) -- (.5,.5);
	\draw (.5,0.05) -- (.5,.5);

	\draw (0,0) circle (.05);
	\draw (.5,0) circle (.05); 
	\draw[fill=black] (0,.5) circle (.05);
	\draw[fill=black] (.5,.5) circle (.05);

	\node at (-.25,0.2) {$5$};
	\node at (.75,0.2) {$6$};

\end{tikzpicture}
  & $[(3,5,3,6)]$ & $\widehat{HV}_3$ & $0.672985\dots$ & $0.655381\dots$  \\
          \begin{tikzpicture}
	
	\draw (0,0) -- (.5,0);
	\draw (0,0) -- (0,.45);
	\draw (0.05,.5) -- (.45,.5);
	\draw (.5,0.05) -- (.5,.45);

	\draw[fill=black] (0,0) circle (.05);
	\draw[fill=black] (.5,0) circle (.05); 
	\draw (0,.5) circle (.05);
	\draw (.5,.5) circle (.05);

	\node at (-.25,0.2) {$4$};
	\node at (.25,0.175) {$4$};
	\node at (.75,0.2) {$4$};

\end{tikzpicture}
  & $[(3,4^3)]$ & $\widehat{CR}_3$ & $0.556282\dots$ & $0.767195\dots$  \\
  
    \hline
    \end{tabular}
    \caption{The 23 asymptotic Coxeter Simplices in $\overline{\bbH}^3$ with optimal packing densities, $\Lambda$ is the Lobachevsky function and $C$ is Catalan's constant. Empty circles in the Coxeter diagram are reflection planes opposite to an ideal vertex.}
    \label{table:simplex_list}
\end{table}

\begin{table}[h!]
	\begin{tabular}{l|l|l|l|l}
		 \hline
		 \multicolumn{5}{c}{{\bf Coxeter Simplex Tilings} }\\
		 \hline
		 Witt Symb. & $\overline{V}_3$ &  $\overline{Y}_3$ &  $\overline{VP}_3$ &  \\
		 \hline
		 \multicolumn{5}{c}{{\bf Vertices of Simplex} }\\
		 \hline
		 $A_0$ & $(1, 0, 0, 1)^*$ & $(1,0,0,1)^*$ & $(1,0,0,1)^*$ &  \\
		 $A_1$ & $(1, 0, 0, 0)$ & $(1,0,0,0)$ & $(1,0,0,0)$ & \\
		 $A_2$ & $(1,\frac{1}{2},0,0)$ & $(1,\frac{1}{2},0,0)$ & $(1,\frac{\sqrt{3}}{2},-\frac{1}{2},0)^*$ & \\
		 $A_3$ & $(1,\frac{1}{2},\frac{\sqrt{3}}{6},0)$ & $(1,\frac{1}{2},\frac{\sqrt{3}}{2},0)^*$ & $(1,\frac{\sqrt{3}}{2},\frac{1}{2},0)^*$ & \\
		 \hline
		 \multicolumn{5}{c}{{\bf The form $\mbox{\boldmath$u$}_i$ of sides opposite $A_i$ }}\\
		\hline
		 $\mbox{\boldmath$u$}_0$ & $(0, 0, 0, 1)$ & $(0, 0, 0, 1)$ & $(0, 0, 0, 1)$ & \\
		 $\mbox{\boldmath$u$}_1$ & $(1, -2, 0, -1)$ & $(1, -2, 0, -1)$ & $(1,-\frac{2 \sqrt{3}}{3},0,-1)$ & \\
		 $\mbox{\boldmath$u$}_2$ & $(0,-\frac{\sqrt{3}}{3},1,0)$ & $(0,-\sqrt{3},1,0)$ & $(0,-\frac{\sqrt{3}}{3},1,0)$ & \\
		 $\mbox{\boldmath$u$}_3$ & $(0, 0, 1, 0)$ & $(0, 0, 1, 0)$ & $(0,\frac{\sqrt{3}}{3},1,0)$ & \\
		 \hline
		 \multicolumn{5}{c}{{\bf Maximal horoball-type parameter $s_i$ for horoball $\mathcal{B}_i$ at $A_i$ }}\\
		\hline
		 $s_0$ & $0$ & $0$ & $0$ & \\
		 %$s_1$ & - & - & - & \\
		 $s_2$ & - & - & $\frac{1}{7}$ & \\		 
		 $s_3$ & - & $\frac{1}{7}$ &  $\frac{1}{7}$ & \\
		 \hline
		 \multicolumn{5}{c}{ Horoball Parameters}\\
		\hline
		 $\mathcal{B}_0 \rightarrow $ & $0$ & $(0, \frac{3}{5})$ & $(0, \frac{3}{5}, \frac{3}{5})$ & \\
		 %$\mathcal{B}_1 \rightarrow $ & - & - & - & \\
		 $\mathcal{B}_2 \rightarrow $ & - & - & $(\frac{1}{2}, \frac{1}{7}, \frac{11}{13})$ & \\
		 $\mathcal{B}_3 \rightarrow $ & - & $(\frac{1}{2}, \frac{1}{7})$ & $(\frac{1}{2}, \frac{11}{13}, \frac{1}{7})$ & \\
		\hline
		\multicolumn{5}{c}{Packing Ratios w.r.t. $\delta_{opt}$}\\
		\hline
		 $\mathcal{B}_0 \rightarrow $ & $\frac{1}{1}\Theta$ & $(\frac{3}{4}, \frac{1}{4})\Theta$ & $(\frac{1}{2}, \frac{1}{4}, \frac{1}{4})\Theta$ & \\
		 %$\mathcal{B}_1 \rightarrow $ & - & - & - & \\
		 $\mathcal{B}_2 \rightarrow $ & - & - & $(\frac{1}{6}, \frac{3}{4}, \frac{1}{12}) \Theta$ & \\
		 $\mathcal{B}_3 \rightarrow $ & - & $(\frac{1}{4}, \frac{3}{4})\Theta$ & $(\frac{1}{6}, \frac{1}{12},\frac{3}{4}) \Theta$ & \\
		\hline
		 \multicolumn{5}{c}{ {\bf Optimal Horoball Packing Density} }\\
		\hline
		$\delta_{opt} $ & $\Theta$ & $\Theta$ & $\Theta$ & \\
		\hline
	\end{tabular}
	\caption{Data for the multiply asymptotic Coxeter simplex tilings in the projective Cayley–Klein ball model of radius $1$, centered at $(1,0,0,0)$. An asterisk $^*$ indicates an ideal vertex.}
	\label{table:data_336_main}
\end{table}

\begin{table}[h!]
	\begin{tabular}{l|l|l|l}
		 \hline
		 \multicolumn{4}{c}{{\bf Coxeter Simplex Tilings} }\\
		\hline
		 Witt Symb. & $\overline{P}_3$ &  $\overline{BV}_3$ &  $\overline{BP}_3$ \\
		 \hline
		 \multicolumn{4}{c}{{\bf Vertices of Simplex} }\\
		 \hline
		 $A_0$ & $(1,0,0,1)^*$ & $(1,0,0,1)^*$ & $(1,0,0,1)^*$ \\
		 $A_1$ & $(1,0,0,0)$ & $(1,0,0,0)$ & $(1,0,0,0)$ \\
		 $A_2$ & $(1,\frac{1}{2},-\frac{\sqrt{3}}{6},0)$ & $(1,\frac{\sqrt{2}}{2},0,0)$ & $(1,\frac{\sqrt{2}}{2},-\frac{\sqrt{6}}{6},0)$ \\
		 $A_3$ & $(1,\frac{1}{2},\frac{\sqrt{3}}{6},0)$ & $(1,\frac{\sqrt{2}}{2},\frac{\sqrt{6}}{6},0)$ & $(1,\frac{\sqrt{2}}{2},\frac{\sqrt{6}}{6},0)$ \\
		 \hline
		 \multicolumn{4}{c}{{\bf The form $\mbox{\boldmath$u$}_i$ of sides opposite to $A_i$ }}\\
		\hline
		 $\mbox{\boldmath$u$}_0$ & $(0, 0, 0, -1)$ & $(0,0,0,1)$ & $(0, 0, 0, 1)$ \\
		 $\mbox{\boldmath$u$}_1$ & $(1, -2, 0, -1)$ & $(1,-\sqrt{2},0,-1)$ & $(1,-\sqrt{2},0,-1)$ \\
		 $\mbox{\boldmath$u$}_2$ & $(0,-\frac{\sqrt{3}}{3},1,0)$ & $(0,-\frac{\sqrt{12}}{6},1,0)$ & $(0,-\frac{\sqrt{12}}{6},1,0)$ \\
		 $\mbox{\boldmath$u$}_3$ & $(0,\frac{\sqrt{3}}{3},1,0)$ & $(0,0,1,0)$ & $(0,\frac{\sqrt{12}}{6},1,0)$ \\
		 \hline
		 \multicolumn{4}{c}{{\bf Maximal horoball parameter $s_0$ }}\\
		\hline
		 $s_0$ & $0$ & $0$ & $0$ \\
		\hline
		 \multicolumn{4}{c}{ {\bf Intersections $H_i = \mathcal{B}(A_0,s_0)\cap A_0A_i$ of horoballs with simplex edges}}\\
		\hline
		 $H_1$ & $(1, 0, 0, 0)$ & $(1, 0, 0, 0)$ & $(1, 0, 0, 0)$  \\
		 $H_2$ & $(1,\frac{3}{7},-\frac{\sqrt{3}}{7},\frac{1}{7})$ & $(1,\frac{2 \sqrt{2}}{5},0,\frac{1}{5})$ & $(1,\frac{3}{4 \sqrt{2}},-\frac{1}{4}\sqrt{\frac{3}{2}},\frac{1}{4})$ \\
		 $H_3$ & $(1,\frac{3}{7},\frac{\sqrt{3}}{7},\frac{1}{7})$ & $1,\frac{3}{4 \sqrt{2}},\frac{\sqrt{\frac{3}{2}}}{4},\frac{1}{4}$ & $(1,\frac{3}{4 \sqrt{2}},\frac{1}{4}\sqrt{\frac{3}{2}},\frac{1}{4})$ \\
		 \hline
		 \multicolumn{4}{c}{ {\bf Volume of maximal horoball piece }}\\
		\hline
		 $vol(\mathcal{B}_0 \cap \mathcal{F})$ & $\frac{1}{8 \sqrt{3}}$ & $\frac{1}{8 \sqrt{3}}$ & $\frac{1}{4 \sqrt{3}}$ \\
		\hline
		\multicolumn{4}{c}{ {\bf Optimal Packing Density}}\\
		\hline
		 $\delta_{opt}$ & $\Theta$ & $\rho$ & $\rho$ \\ 
		\hline
	\end{tabular}
	\caption{Data for the simply asymptotic Coxeter simplex tilings in the projective Cayley–Klein ball model of radius $1$, centered at $(1,0,0,0)$. An asterisk $^*$ indicates an ideal vertex.}
	\label{table:data_336_one}
\end{table}

\begin{landscape}

\begin{table}[h!]
\resizebox{\columnwidth}{!}{%
	\begin{tabular}{l|l|l|l|l}
		 \hline
		 \multicolumn{5}{c}{{\bf Coxeter Simplex Tilings} }\\
		 \multicolumn{5}{c}{{\bf Doubly Asymptotic Nonarithmetic} } \\
		 \hline
		 Witt Symb.: & $\overline{DV}_3$ & $\overline{DP}_3$ & $\overline{VV}_3$ & $\overline{Z}_3$\\
		 \hline
		 \multicolumn{4}{c}{{\bf Vertices of Simplex}}\\
		 \hline
		 $A_0$ & $(1, 0, 0, 1)^*$ & $(1, 0, 0, 1)^*$ & $(1, 0, 0, 1)^*$ & $(1, 0, 0, 1)^*$\\
		 $A_1$ & $(1, 0, 0, -1)^*$ & $(1, 0, 0, -1)^*$ & $(1, 0, 0, -1)^*$ & $(1, 0, 0, -1)^*$ \\
		 $A_2$ & $(1,\frac{\sqrt{6}}{3},0,0)$ & $(1,\frac{\sqrt{2}}{2},\frac{\sqrt{6}}{6},0)$ & $(1,\sqrt{4 \sqrt{3}-6},0,2-\sqrt{3})$ & $(1,\frac{\sqrt{3}}{3},-\frac{1}{3},\frac{1}{3})$\\
		 $A_3$ & $(1,\frac{\sqrt{6}}{4},\frac{\sqrt{2}}{4},0)$ & $(1,\frac{\sqrt{2}}{2},-\frac{\sqrt{6}}{6},0)$ & $(1,0,\sqrt{4 \sqrt{3}-6},\sqrt{3}-2)$ & $(1,\frac{\sqrt{3}}{3},\frac{1}{3},-\frac{1}{3})$\\
		 \hline
		 \multicolumn{5}{c}{{\bf The form $\mbox{\boldmath$u$}_i$ of sides opposite $A_i$ }}\\
		\hline
		 $\Bu_0$ & $(1,-\frac{\sqrt{6}}{2},-\frac{\sqrt{2}}{2},1)$ & $(1,-\sqrt{2},0,1)$ & $(1,-\sqrt[4]{3},-\frac{1}{\sqrt[4]{3}},1)$ & $(1,-\sqrt{3},1,1)$\\
		 $\Bu_1$ & $(1,-\frac{\sqrt{6}}{2},-\frac{\sqrt{2}}{2},-1)$ & $(1,-\sqrt{2},0,-1)$ & $(1,-\sqrt[4]{3},-\frac{1}{\sqrt[4]{3}},-1)$ & $(1,-\sqrt{3},-1,-1)$ \\
		 $\Bu_2$ & $(0,-\frac{\sqrt{3}}{3},1,0)$ & $(0,\frac{\sqrt{12}}{6},1,0)$ & $(0,\frac{\sqrt{12}}{6},1,0)$ & $(0,-\frac{\sqrt{3}}{3},1,0)$\\
		 $\Bu_3$ & $(0, 0, 1, 0)$ & $(0,-\frac{\sqrt{12}}{6},1,0)$ & $(0,-\frac{\sqrt{12}}{6},1,0)$ & $(0,\frac{\sqrt{3}}{3},1,0)$\\
		 \hline
		 \multicolumn{5}{c}{{\bf Maximal horoball-type parameter $s_i$ for horoball $\mathcal{B}_i$ at $A_i$ }}\\
		\hline
		 $\max s_0 \implies s_1 $ & $-\frac{1}{3} \implies \frac{1}{3}$ & $-\frac{1}{3} \implies \frac{1}{3}$ & $-2 + \sqrt{3} \implies 2 - \sqrt{3}$ & $0 \implies 0$ \\
		 $\max s_1 \implies s_0 $ & $-\frac{1}{3} \implies \frac{1}{3}$ & $-\frac{1}{3} \implies \frac{1}{3}$ & $-2 + \sqrt{3} \impliedby 2 - \sqrt{3}$ & $0 \impliedby 0$ \\
		 \hline
		 \multicolumn{5}{c}{ {\bf Volumes of optimal horoball pieces $V_i = vol(\mathcal{B}_i \cap \mathcal{F}_{\Gamma})$}}\\
		\hline
		 $V_{\max \mathcal{B}_0} \implies V_{\mathcal{B}_1}$ & $\frac{1}{4 \sqrt{3}} \implies \frac{1}{16 \sqrt{3}}$ & $\frac{1}{2 \sqrt{3}} \implies \frac{1}{8 \sqrt{3}}$ & $\frac{\sqrt{3}}{4} \implies \frac{1}{4 \sqrt{3}}$ & $ \frac{\sqrt{3}}{16} \implies \frac{\sqrt{3}}{16}$ \\
		 $V_{\mathcal{B}_0} \impliedby V_{\max \mathcal{B}_1}$ & $\frac{1}{4 \sqrt{3}} \impliedby \frac{1}{16 \sqrt{3}}$ & $\frac{1}{2 \sqrt{3}} \impliedby \frac{1}{8 \sqrt{3}}$ & $\frac{1}{4 \sqrt{3}} \impliedby \frac{\sqrt{3}}{4}$ & $ \frac{\sqrt{3}}{16} \implies \frac{\sqrt{3}}{16}$ \\
		\hline
		 \multicolumn{5}{c}{ {\bf Densities of horoball pieces $\delta_i = vol(\mathcal{B}_i \cap \mathcal{F}_{\Gamma})$}}\\
		\hline
		 $(\delta_{\max s_0},\delta_{s_1})$ & $(\frac{4}{5}, \frac{1}{5})\delta_{opt}$ & $(\frac{4}{5}, \frac{1}{5})\delta_{opt}$ & $(\frac{3}{4}, \frac{1}{4})\delta_{opt}$ & $(\frac{1}{2}, \frac{1}{2})\delta_{opt}$ \\
		 $(\delta_{s_0},\delta_{\max s_1})$ & $(\frac{1}{5}, \frac{4}{5})\delta_{opt}$ & $(\frac{4}{5}, \frac{4}{5})\delta_{opt}$ & $(\frac{1}{4}, \frac{3}{4})\delta_{opt}$ & $(\frac{1}{2}, \frac{1}{2})\delta_{opt}$ \\
		\hline
		\multicolumn{5}{c}{ {\bf Optimal Horoball Packing Density} }\\
		\hline
		$\delta_{opt}$ & $\Theta$ & $\Theta$ & $\rho$ & $\Theta$ \\
		\hline
	\end{tabular}%
}
	\caption{Data for doubly asymptotic Coxeter simplex tilings in the Cayley–Klein ball model of radius $1$, centered at $(1,0,0,0)$. Vertices marked with $^*$ denote ideal vertices.}
	\label{table:data_two}
\end{table}

\end{landscape}

\newpage

\begin{table}[h!]
	\begin{tabular}{l|l|l|l}
		\hline
		\multicolumn{4}{c}{{\bf Coxeter Simplex Tilings} }\\
		\hline
		 Witt Symb. & $\overline{R}_3$ &  $\overline{O}_3$ &  $\widehat{BR}_3$ \\
		 \hline
		 \multicolumn{4}{c}{{\bf Vertices of Simplex} }\\
		 \hline
		 $A_0$ & $(1, 0, 0, 1)^*$ & $(1, 0, 0, 1)^*$ & $(1, 0, 0, 1)^*$  \\
		 $A_1$ & $(1,0,0,0)$ & $(1,0,0,0)$ & $(1,0,0,0)$  \\
		 $A_2$ & $(1,0,\frac{\sqrt{3}}{4},\frac{1}{4})$ & $(1,\frac{1}{2},-\frac{1}{2},0)$ & $(1,0,\frac{\sqrt{2}}{2},\frac{1}{2})$  \\
		 $A_3$ & $(1,\frac{\sqrt{3}}{3},0,0)$ & $(1,\frac{1}{2},\frac{1}{2},0)$ & $(1,\frac{\sqrt{2}}{2},0,\frac{1}{2})$   \\
		 \hline
		 \multicolumn{4}{c}{{\bf The form $\mbox{\boldmath$u$}_i$ of sides opposite to $A_i$ }}\\
		\hline
		 $\mbox{\boldmath$u$}_0$ & $(0,0,-\frac{1}{\sqrt{3}},1)$ & $(0, 0, 0, 1)$ & $(0,-\frac{\sqrt{2}}{2},-\frac{\sqrt{2}}{2},1)$  \\
		 $\mbox{\boldmath$u$}_1$ & $(0, 0, 1, 0)$ & $(1, -2, 0, -1)$ & $(1,-\frac{\sqrt{2}}{2},-\frac{\sqrt{2}}{2},-1)$  \\
		 $\mbox{\boldmath$u$}_2$ & $(1,-\sqrt{3},-\sqrt{3},-1)$ & $(0,-1,1,0)$ & $(0,0,1,0)$    \\
		 $\mbox{\boldmath$u$}_3$ & $(0, 1, 0, 0)$ & $(0, 1, 1, 0)$ & $(0, 1, 0, 0)$    \\
		 \hline
		 \multicolumn{4}{c}{{\bf Maximal horoball parameter $s_0$ }}\\
		\hline
		 $s_0$ & $\frac{1}{7}$ & $0$ & $\frac{1}{3}$   \\
		\hline
		 \multicolumn{4}{c}{ {\bf Intersections $H_i = \mathcal{B}(A_0,s_0)\cap A_0A_i$ of horoballs with simplex edges}}\\
		\hline
		 $H_1$ & $(1, 0, 0, 0)$ & $(1, 0, 0, 0)$ & $(1, 0, 0, 0)$   \\
		 $H_2$ & $(1,0,\frac{2 \sqrt{3}}{7},\frac{1}{7})$ & $(1,\frac{2}{5},-\frac{2}{5},\frac{1}{5})$ & $(1,0,\frac{1}{\sqrt{2}},\frac{1}{2})$ \\
		 $H_3$ & $(1,\frac{2 \sqrt{3}}{7},0,\frac{1}{7})$ & $(1,\frac{2}{5},\frac{2}{5},\frac{1}{5})$ & $(1,\frac{1}{\sqrt{2}},0,\frac{1}{2})$  \\
		 \hline
		 \multicolumn{4}{c}{ {\bf Volume of maximal horoball piece }}\\
		\hline
		 $vol(\mathcal{B}_0 \cap \mathcal{F})$ & $\frac{1}{12}$ & $\frac{1}{8}$ & $\frac{1}{2}$  \\
		\hline
		\multicolumn{4}{c}{ {\bf Optimal Packing Density}}\\
		\hline
		 $\delta_{opt}$ & $\frac{3}{4 C}$ & $\frac{3}{4 C}$ & $\frac{3}{4 C}$   \\
	\end{tabular}
	\caption{Data for the simply asymptotic Coxeter simplex tilings in the projective Cayley–Klein ball model of radius $1$, centered at $(1,0,0,0)$. An asterisk $^*$ indicates an ideal vertex.}
	\label{table:data_344_one}
\end{table}

\begin{table}[h!]
\resizebox{\columnwidth}{!}{%
	\begin{tabular}{l|l|l|l}
		 \hline
		 \multicolumn{4}{c}{{\bf Coxeter Simplex Tilings} }\\
		 \hline
		 & \multicolumn{3}{|c}{{\bf Doubling Sequence} } \\
		 \hline
		 Witt Symb. & $\overline{N}_3$ &  $\overline{M}_3$ &  $\widehat{RR}_3$ \\
		 \hline
		 \multicolumn{4}{c}{{\bf Vertices of Simplex} }\\
		 \hline
		 $A_0$ & $(1, 0, 0, 1)^*$ & $(1, 0, 0, 1)^*$ & $(1, 0, 0, 1)^*$ \\
		 $A_1$ & $(1,\sqrt{\sqrt{2}-1},-\sqrt{5 \sqrt{2}-7},3-2 \sqrt{2})$ & $(1, 0, 0, 0)$ & $(1, 1, 0, 0)^*$ \\
		 $A_2$ & $(1,\sqrt{\sqrt{2}-1},\sqrt{5 \sqrt{2}-7},2 \sqrt{2}-3)$ & $(1,\frac{\sqrt{2}}{2},-\frac{\sqrt{2}}{2},0)$ & $(1,0,0,-1)^*$ \\
		 $A_3$ & $(1, 0, 0, -1)^*$ & $(1,\frac{\sqrt{2}}{2},\frac{\sqrt{2}}{2},0)$ & $(1, 0, 1, 0)^*$ \\
		 \hline
		 \multicolumn{4}{c}{{\bf The form $\mbox{\boldmath$u$}_i$ of sides opposite $A_i$ }}\\
		\hline
		 $\mbox{\boldmath$u$}_0$ & $(1,-\sqrt{\sqrt{2}+1},\sqrt{\sqrt{2}-1},1)$ & $(0, 0, 0, 1)$ & $(1,-1,-1,1)$ \\
		 $\mbox{\boldmath$u$}_1$ & $(0,1-\sqrt{2},1,0)$ & $(1,-\sqrt{2},0,-1)$ & $(0, 1, 0, 0)$ \\
		 $\mbox{\boldmath$u$}_2$ & $(0,\sqrt{2}-1,1,0)$ & $(0, -1, 1, 0)$ & $(1,-1,-1,-1)$ \\
		 $\mbox{\boldmath$u$}_3$ & $(1,-\sqrt{\sqrt{2}+1},-\sqrt{\sqrt{2}-1},-1)$ & $(0, 1, 1, 0)$ & $(0,0,1,0)$ \\
		 \hline
		 \multicolumn{4}{c}{{\bf Maximal horoball-type parameter $s_i$ for horoball $\mathcal{B}_i$ at $A_i$ }}\\
		\hline
		 $s_0$ & $\frac{\sqrt{2}-2}{\sqrt{2}+2}$ & 0 & $-\frac{1}{3}$ \\
		 $s_1$ & $\frac{\sqrt{2}-2}{\sqrt{2}+2}$ & 0 & $0$\\
		 $s_2$ & - & 0 & $-\frac{1}{3}$ \\
		 $s_3$ & - &  -& $0$\\
		 \hline
		 \multicolumn{4}{c}{ Horoball Parameters}\\
		\hline
		 $\mathcal{B}_0 \rightarrow \mathcal{B}_i$ & $s_0=\frac{\sqrt{2}-2}{\sqrt{2}+2} \rightarrow s_1=\frac{\sqrt{2}+1}{5 \sqrt{2}+7}$ & $s_0 = 0 \rightarrow (s_2 = \frac{3}{5}, s_3 = \frac{3}{5})$ & $s_0 = -\frac{1}{3} \rightarrow (s_1 = \frac{7}{9}, s_2 = \frac{1}{3}, s_3 = \frac{7}{9})$\\
		 $\mathcal{B}_i \rightarrow (\mathcal{B}_0, \mathcal{B}_2)$ & $s_1=\frac{\sqrt{2}-2}{\sqrt{2}+2} \rightarrow s_0=\frac{\sqrt{2}+1}{5 \sqrt{2}+7}$ & $s_1 = 0 \rightarrow (s_0 = \frac{3}{5}, s_3 = \frac{3}{5})$ &  - \\
		  $\mathcal{B}_2 \rightarrow (\mathcal{B}_0, \mathcal{B}_1)$ & - & $s_3 = 0 \rightarrow (s_0 = \frac{3}{5}, s_2 = \frac{3}{5})$ & $s_2 = -\frac{1}{3} \rightarrow (s_1 = \frac{7}{9}, s_0 = \frac{1}{3}, s_3 = \frac{7}{9})$ \\
		 $\mathcal{B}_1 \rightarrow (\mathcal{B}_0, \mathcal{B}_2, \mathcal{B}_4)$ & - & - & - \\
		\hline
		\multicolumn{4}{c}{Packing Ratios w.r.t. $\delta_{opt}$}\\
		\hline
		 $\mathcal{B}_0 \rightarrow \mathcal{B}_i$ & $(\frac{2}{3}, \frac{1}{3})\delta_{opt}$ & $(\frac{2}{3}, \frac{1}{6}, \frac{1}{6})\delta_{opt}$ & $(\frac{2}{3},\frac{1}{6}, \frac{1}{12}, \frac{1}{12})\delta_{opt}$ \\
		 $\mathcal{B}_i \rightarrow (\mathcal{B}_0, \mathcal{B}_2)$ & $(\frac{1}{3}, \frac{2}{3})\delta_{opt}$ & $(\frac{1}{6}, \frac{2}{3}, \frac{1}{6})\delta_{opt}$ & - \\
		 $\mathcal{B}_2 \rightarrow (\mathcal{B}_0, \mathcal{B}_1)$ & - & $(\frac{1}{6}, \frac{1}{6}, \frac{2}{3})\delta_{opt}$ & - \\
		 $\mathcal{B}_1 \rightarrow(\mathcal{B}_0, \mathcal{B}_2, \mathcal{B}_4)$ & - & - & - \\
		\hline
		 \multicolumn{4}{c}{ {\bf Optimal Horoball Packing Density} }\\
		\hline
		$\delta_{opt} $ & $\frac{3}{4C}$ & $\frac{3}{4C}$ & $\frac{3}{4C}$ \\
		\hline
	\end{tabular}%
}
	\caption{Data for multiply asymptotic Coxeter simplex tilings in the Cayley-Klein ball model of radius $1$ centered at $(1,0,0,0)$. An asterisk $^*$ indicates an ideal vertex.}
	\label{table:data_nml}
\end{table}

\newpage

\begin{table}[h!]
\resizebox{\columnwidth}{!}{%
	\begin{tabular}{l|l|l}
		 \hline
		 \multicolumn{3}{c}{{\bf Coxeter Simplex Tilings} }\\
		\hline
		 Witt Symb. & $\overline{HV}_3$ &  $\overline{HP}_3$  \\
		 \hline
		 \multicolumn{3}{c}{{\bf Vertices of Simplex} }\\
		 \hline
		 $A_0$ & $(1, 0, 0, 1)^*$ & $(1, 0, 0, 1)^*$ \\
		 $A_1$ & $(1, 0, 0, 0)$ & $(1, 0, 0, 0)$ \\
		 $A_2$ & $(1,\frac{1}{4} \left(\sqrt{5}+1\right),0,0)$ & $(1,\frac{1}{4} \left(\sqrt{5}+1\right),-\frac{1}{2} \sqrt{\frac{1}{6} \left(\sqrt{5}+3\right)},0)$ \\
		 $A_3$ & $(1,\frac{1}{4} \left(\sqrt{5}+1\right),\frac{1}{2} \sqrt{\frac{1}{6} \left(\sqrt{5}+3\right)},0)$ & $(1,\frac{1}{4} \left(\sqrt{5}+1\right),\frac{1}{2} \sqrt{\frac{1}{6} \left(\sqrt{5}+3\right)},0)$ \\
		 \hline
		 \multicolumn{3}{c}{{\bf The form $\mbox{\boldmath$u$}_i$ of sides opposite to $A_i$ }}\\
		\hline
		 $\mbox{\boldmath$u$}_0$ & $(0, 0, 0, 1)$ & $(0, 0, 0, 1)$ \\
		 $\mbox{\boldmath$u$}_1$ & $(1,1-\sqrt{5},0,-1)$ & $(1,1-\sqrt{5},0,-1)$ \\
		 $\mbox{\boldmath$u$}_2$ & $(0,-\frac{1}{\sqrt{3}},1,0)$ & $(0,-\frac{\sqrt{3}}{3},1,0)$ \\
		 $\mbox{\boldmath$u$}_3$ & $(0,0,1,0)$ & $(0,\frac{\sqrt{3}}{3},1,0)$ \\
		 \hline
		 \multicolumn{3}{c}{{\bf Maximal horoball parameter $s_0$ }}\\
		\hline
		 $s_0$ & $0$ & $0$ \\
		\hline
		 \multicolumn{3}{c}{ {\bf Intersections $H_i = \mathcal{B}(A_0,s_0)\cap A_0A_i$ of horoballs with simplex edges}}\\
		\hline
		 $H_1$ & $(1,0,0,0)$ & $(1, 0, 0, 0)$ \\
		 $H_2$ & $(1,\frac{2}{89} \left(9 \sqrt{5}+7\right),0,\frac{1}{89} \left(4 \sqrt{5}+13\right))$ & $(1,\frac{3}{110} \left(7 \sqrt{5}+5\right),-\frac{\sqrt{6 \left(\sqrt{5}+3\right)}}{\sqrt{5}+15},\frac{1}{55} \left(3 \sqrt{5}+10\right))$ \\
		 $H_3$ & $(1,\frac{3}{110} \left(7 \sqrt{5}+5\right),\frac{\sqrt{6 \left(\sqrt{5}+3\right)}}{\sqrt{5}+15},\frac{1}{55} \left(3 \sqrt{5}+10\right))$ & $(1,\frac{3}{110} \left(7 \sqrt{5}+5\right),\frac{\sqrt{6 \left(\sqrt{5}+3\right)}}{\sqrt{5}+15},\frac{1}{55} \left(3 \sqrt{5}+10\right) )$ \\
		 \hline
		 \multicolumn{3}{c}{ {\bf Volume of maximal horoball piece }}\\
		\hline
		 $vol(\mathcal{B}_0 \cap \mathcal{F})$ & $\frac{1}{16} \sqrt{\frac{1}{6} \left(3 \sqrt{5}+7\right)}$ & $\frac{1}{8} \sqrt{\frac{1}{6} \left(3 \sqrt{5}+7\right)}$ \\
		\hline
		\multicolumn{3}{c}{ {\bf Optimal Packing Density}}\\
		\hline
		 $\delta_{opt}$ & $0.550841$ & $0.550841$ \\
		\hline
	\end{tabular}%
}
	\caption{Data for the simply asymptotic Coxeter simplex tilings in the projective Cayley-Klein ball model of radius 1 centered at (1,0,0,0). An asterisk $^*$ indicates an ideal vertex.}
	\label{table:data_536}
\end{table}

\begin{landscape}

\begin{table}[h!]
\resizebox{\columnwidth}{!}{%
	\begin{tabular}{l|l|l|l|l}
		 \hline
		 \multicolumn{5}{c}{{\bf Coxeter Simplex Tilings} }\\
		 \multicolumn{5}{c}{{\bf Doubly Asymptotic Nonarithmetic} } \\
		 \hline
		 Witt Symb.: & $\widehat{CR}_3$ & $\widehat{AV}_3$ & $\widehat{BV}_3$ & $\widehat{HV}_3$\\
		 \hline
		 \multicolumn{4}{c}{{\bf Vertices of Simplex}}\\
		 \hline
		 $A_0$ & $(1, 0, 0, 1)^*$ & $(1, 0, 0, 1)^*$ & $(1, 0, 0, 1)^*$ & $(1, 0, 0, 1)^*$\\
		 $A_1$ & $(1, 0, -1, 0)^*$ & $(1, 0, -1, 0)^*$ & $(1, 0, -1, 0)^*$ & $(1, 0, -1, 0)^*$ \\
		 $A_2$ & $(1,0,\frac{\sqrt{2}}{2},0)$ & $(1,0,\frac{\sqrt{3}}{3},0)$ & $(1,0,\frac{\sqrt{6}}{3},0)$ & $(1,0,\sqrt{\frac{1}{6} \left(\sqrt{5}+3\right)},0)$ \\
		 $A_3$ & $(1,\frac{\sqrt{2}}{4}+\frac{1}{2},\frac{\sqrt{2}}{4}-\frac{1}{2},0)$ & $(1,\frac{1}{4} \left(\sqrt{3}+1\right),\frac{1}{4} \left(\sqrt{3}-1\right),0)$ & $(1,\frac{1}{4} \left(\sqrt{2}+\sqrt{3}\right),\frac{1}{4} \left(\sqrt{6}-1\right),0)$ & $(1,\frac{1}{8} \left(2 \sqrt{3}+\sqrt{5}+1\right),\frac{1}{8} \left(\sqrt{6 \left(\sqrt{5}+3\right)}-2\right),0)$ \\
		 \hline
		 \multicolumn{5}{c}{{\bf The form $\mbox{\boldmath$u$}_i$ of sides opposite $A_i$ }}\\
		\hline
		 $\Bu_0$ & $(0, 0, 0, 1)$ & $(0, 0, 0, 1)$ & $(0, 0, 0, 1)$ & $(0, 0, 0, 1)$ \\
		 $\Bu_1$ & $(1,-1,-\sqrt{2},-\sqrt{2})$ & $(1,-1,-\sqrt{3},-1)$ & $(1,-\frac{1}{\sqrt{2}},-\frac{3}{\sqrt{6}},-1)$ & $(1,\frac{1}{2} \left(1-\sqrt{5}\right),\frac{1}{2} \sqrt{3} \left(1-\sqrt{5}\right),-1)$ \\
		 $\Bu_2$ & $(1, -1, 1, -1)$ & $(1,-\sqrt{3},1,-1)$ & $(1,-\sqrt{3},1,-1)$ & $(1,-\sqrt{3},1,-1)$ \\
		 $\Bu_3$ & $(0, 1, 0, 0)$ & $(0, 1, 0, 0)$ & $(0, 1, 0, 0)$ & $(0, 1, 0, 0)$ \\
		 \hline
		 \multicolumn{5}{c}{{\bf Maximal horoball-type parameter $s_i$ for horoball $\mathcal{B}_i$ at $A_i$ }}\\
		\hline
		 $\max s_0 \implies s_1 $ & $s_0 = 0 \implies s_1=3/5 $ & $s_0 = 0 \implies s_1=3/5$ & $s_0 = 0 \implies s_1=3/5$ & $s_0 = 0 \implies s_1=3/5$ \\
		 $\max s_1 \implies s_0 $ & $s_1 = \frac{1}{41} \left(15-16 \sqrt{2}\right) \implies s_0 = \frac{1}{\sqrt{2}} $ & $s_1=\frac{1}{13} \left(3 - 4 \sqrt{3}\right) \implies s_0 = \frac{1}{13} \left(4 \sqrt{3} + 3 \right) $ & $s_1 = \frac{1}{57} \left(15 - 16 \sqrt{6}\right) \implies s_0 = \sqrt{\frac{2}{3}} $ & $s_1 = \frac{-20 \sqrt{3}-5 \sqrt{5}+12 \sqrt{15}+23}{84 \sqrt{3}+85 \sqrt{5}-44 \sqrt{15}-215} \implies s_0 = \frac{1}{181} \left(32 \sqrt{3}-19 \sqrt{5}+28 \sqrt{15}+30\right)$ \\
		 \hline
		 \multicolumn{5}{c}{ {\bf Volumes of optimal horoball pieces $V_i = vol(\mathcal{B}_i \cap \mathcal{F}_{\Gamma})$}}\\
		\hline
		 $V_{\max \mathcal{B}_0} \implies V_{\mathcal{B}_1}$ & $\frac{1}{16} \left(2 \sqrt{2}+3\right) \implies \frac{1}{16}$ & $ \frac{1}{24} \left(2 \sqrt{3}+3\right) \implies \frac{1}{16 \sqrt{3}} $ & $\frac{1}{8} \sqrt{5 \sqrt{\frac{2}{3}}+\frac{49}{12}} \implies \frac{1}{16 \sqrt{3}} $ & $\frac{1}{96} \left(9 \sqrt{3}+6 \sqrt{5}+\sqrt{15}+6\right) \implies \frac{1}{16 \sqrt{3}}$ \\
		 $V_{\mathcal{B}_0} \impliedby V_{\max \mathcal{B}_1}$ & $\frac{1}{16} \impliedby \frac{1}{16} \left(2 \sqrt{2}+3\right)$ & $\frac{1}{16 \sqrt{3}}  \impliedby \frac{1}{24} \left(2 \sqrt{3}+3\right)$ & $\frac{1}{16 \sqrt{3}} \impliedby \frac{1}{8} \sqrt{5 \sqrt{\frac{2}{3}}+\frac{49}{12}}$ & $\frac{1}{16 \sqrt{3}}  \impliedby \frac{1}{96} \left(9 \sqrt{3}+6 \sqrt{5}+\sqrt{15}+6\right)$\\
		\hline
		 \multicolumn{5}{c}{ {\bf Densities of horoball pieces $\delta_i = vol(\mathcal{B}_i \cap \mathcal{F}_{\Gamma})$}}\\
		\hline
		 $(\delta_{\max s_3},\delta_{s_2})$ & $(0.853553, 0.146447)$ & $(0.881854,0.118146)$ & $(0.908248,0.0917517)$ & $(0.918187,0.0818125)$\\
		 $(\delta_{s_3},\delta_{\max s_2})$ & $(0.146447, 0.853553)$ & $(0.118146,0.881854)$ & $(0.0917517,0.908248)$ & $(0.0818125, 0.918187)$\\
		\hline
		\multicolumn{5}{c}{ {\bf Optimal Horoball Packing Density} }\\
		\hline
		$\delta_{opt}$ & $0.767195$ & $0.838825$ & $0.747914$ & $0.655381$ \\
		\hline
	\end{tabular}%
}
\caption{Data nonarithmetic Coxeter simplex tilings with two ideal vertices in the Cayley-Klein ball model of radius $1$ centered at $(1,0,0,0)$. An asterisk $^*$ indicates an ideal vertex.}
	\label{table:other}
\end{table}

\end{landscape}

\newpage

%============================================================================%
%                                references                                  %
%============================================================================%

\end{document}